\newtheorem{theorem}{Theorem}[section]
\newtheorem{corollary}[theorem]{Corollary}
\newtheorem{lemma}[theorem]{Lemma}
\newtheorem{remark}[theorem]{Remark}
\begin{document}
\title{Deforming convex curves with constant anisotropic length}

\author{Zezhen Sun \footnote{School of Mathematical Sciences, East China Normal University, Shanghai 200241, China. E-mail address: \texttt{52205500017@stu.ecnu.edu.cn}}
}

\maketitle
\begin{abstract}
  \noindent
  In this paper, we study a curve flow which preserves the anisotropic length of the evolving
  curve, and show that for any convex closed initial curve, the flow exists for all time and the evolving curve converges to a homothety of the boundary of some Wulff shape defined by anisotropic function as time $t\to\infty$.
  \\
  \\
  {\bf Keywords:}Curvature flow, Anisotropic curvature, Long-time behaviour, Wulff shape
  \\
  {\bf Mathematics Subject Classification:} 35B40,35K15,35K55,53E10
\end{abstract}

\section{Introduction}
The curvature flow of plane curves, arising in many application fields, such as phase transitions, crystal growth, image processing and smoothing, etc., has received a lot of attention in the last few decades. In general, the evolution equation has the form
\begin{equation}\label{flo}
  \left\{\begin{aligned}
    \frac{\partial X(u, t)}{\partial t}&=f(\kappa(u,t))\textbf{N}_{in}(u,t),\\
    X(u, 0)&=X_{0}(u),
  \end{aligned}
  \right.
  \end{equation}
where $X_{0}(u)\subset\mathbb{R}^{2}$ is a given smooth closed curve, parameterized by $u\in S^{1}$, and $X(u, t):S^{1}\times[0,T)\rightarrow\mathbb{R}^{2}$ is a family of curves moving along its inward normal direction $
\textbf{N}_{in}(u,t)$ with given speed function $f(\kappa(u,t))$, which is a strictly increasing (parabolicity) function
of the curvature $\kappa(u,t)$ of $X(u, t)$.

When $f(\kappa(u,t))=\kappa(u,t)$, \eqref{flo} is the well-known curve shortening flow, which has been
intensively studied by many scholars for various conditions on the initial curve $X_{0}$. One can see the book \cite{Chou-Zhu} for literature. In particular, we mention the papers \cite{Gage83,Gage85,Gage-Hamilton,Grayson,Grayson87}. Another class of interesting curvature flow is the so-called nonlocal curvature flow, such as the area-preserving flows \cite{Ma-Cheng,Gage85,Mao-Pan-Wang,Sun} and the length-preserving flows \cite{Tsai-Wang,Pan-Yang,Ma-Zhu} etc.

A number of mathematicians have begun to focus on the following question:

\textbf{Question 1}: Whether one can deform a simple curve $X_{0}$ into another one $\widetilde{X}$ by a curve flow?

Early in the year 1993, Gage-Li \cite{Gage93,Gage94} studied an anisotropic curve shortening flow
\begin{equation*}
  \left\{\begin{aligned}
    \frac{\partial X(u, t)}{\partial t}&=(\widetilde{p}/\widetilde{\kappa})\kappa \textbf{N}_{in}(u,t),\\
    X(u, 0)&=X_{0}(u),
  \end{aligned}
  \right.
  \end{equation*}
where $\widetilde{p}=-<\widetilde{X},\widetilde{N}>$ and $\widetilde{\kappa}$ are the support function and the curvature of the target curve $\widetilde{X}$, respectively. They proved that the evolving curve converges to the shape of the target curve as $X(\cdot,t)$ shrinks to a point, provided that the convex domain bounded by $\widetilde{X}$ is symmetric.

Since anisotropy is indispensable when working with crystalline materials, Mullin's theory was generalized by Angement-Gurtin\cite{an1,an2} and by Gurtin\cite{gu1,gu2} to include anisotropy in the motion of interface. Following their work, Chou-Zhu \cite{cz1,cz2} considered a generalized anisotropic flow
\begin{equation*}
  \left\{\begin{aligned}
    \frac{\partial X(u, t)}{\partial t}\beta(\theta)&=\big(g(\theta)\kappa+F\big) \textbf{N}_{in}(u,t),\\
    X(u, 0)&=X_{0}(u),
  \end{aligned}
  \right.
  \end{equation*}
where $\theta$ is the tangent angle of the interface, $\beta>0$ represents the resistance of the interfacial boundary to motion, $g$ is a positive function of the form $$g=d^{2}f/d\theta^{2}+f,$$
with $f>0$ being the interfacial energy, and the constant $F$ is the energy of the solid relative to its surrounding. They showed that the behavior of the solution can be classified according to a critical value $F^{\ast}$. One of the cases is when $F=F^{\ast}$, the flow exists globally and $X$ converges to a stationary solution. This an alternative model to evolve $X_{0}$ to $\widetilde{X}$.

In \cite{iv}, Ivaki deformed a convex curve into an ellipse by a centro-affine curvature flow. In \cite{py}, Pan-Yang deformed a convex curve into another centrally one by an anisotropic area-preserving curvature
flow. For other studies related to the \textbf{Question 1}, one can also refer to Bene$\breve{s}$-Yazaki-Kimura \cite{byk}, Lin-Tsai\cite{lt}, Gao-Zhang\cite{gz}, Pan-Zhang\cite{pz}, Yazaki\cite{ya}, Li-Wang\cite{lw} and etc.

Inspired by their works, we now investigate the following anisotropic curvature flow for convex curves
\begin{equation}\label{flow}
  \left\{\begin{aligned}
    \frac{\partial X(u, t)}{\partial t}&=\bigg(\widetilde{p}\big(\mathcal{K}-\frac{\lambda(t)}{2\widetilde{A}}\big)\bigg)\textbf{N}_{in}(u,t),\\
    X(u, 0)&=X_{0}(u),
  \end{aligned}
  \right.
  \end{equation}
where $\lambda(t)=\int_{X (\cdot, t)}\widetilde{p}\mathcal{K}^{2}ds$, $\mathcal{K}$ is the anisotropic curvature of $X$, $\widetilde{A}$ denotes the enclosed area of the target curve $\widetilde{X}$ and it is a constant. Under the flow \eqref{flow}, the anisotropic length of the evolving curve is invariable, which makes this flow different from all the previous ones. The detailed definitions of anisotropic curvature and anisotropic length can be found in next section.
\section{Main theorem}
For a convex curve $X$, we can parameterize it by its tangential angle $\theta\in S^{1}$. The relation
between arc-length $s$, tangential angle $\theta$ and curvature $\kappa$ of $X$ can be described by
$$d\theta=\kappa ds.$$
The support function $p(\theta)$ of $X$ is the distance from the origin to the tangent line
of this curve, that is,
$$p(\theta)=-<X,\textbf{N}>.$$
where $\textbf{N}$ is the unit inward pointing normal vector along this curve. The relation between the curvature $\kappa$ and the support function $p$ is
$$\kappa(\theta)=\frac{1}{p(\theta)+p^{''}(\theta)}.$$
Besides, the length of $X$ is equal to
$$L=\int^{L}_{0}ds=\int^{2\pi}_{0}\frac{d\theta}{\kappa}=\int^{2\pi}_{0}pd\theta,$$
and the enclosed area of $X$ is equal to
\begin{equation}\label{a}
A=\frac{1}{2}\int_{0}^{L}pds=\frac{1}{2}\int_{0}^{L}\frac{p}{\kappa}d\theta=\frac{1}{2}\int_{0}^{2\pi}(p^{2}-p^{2}_{\theta})d\theta.
\end{equation}
Let $\widetilde{p}$ be a given positive smooth function satisfying
\begin{equation}\label{condition}
\widetilde{p}(\theta+2\pi)=\widetilde{p}(\theta),\quad\widetilde{p}^{''}(\theta)+\widetilde{p}(\theta)>0,\quad\theta\in S^{1}
\end{equation}
Then we have a positive \textbf{anisotropic function} $\phi(\theta)$, i.e.,
$$\phi(\theta)=\widetilde{p}^{''}(\theta)+\widetilde{p}(\theta),\quad\theta\in S^{1}.$$
Assume
$$0<M_{1}\le\phi(\theta)\le M_{2},\quad\theta\in S^{1}.$$
where $M_{1}$ and $M_{2}$ are the minimum and maximum of function $\phi(\theta)$ on $S^{1}$, respectively. For a given smooth function $\widetilde{p}(\theta)>0$ satisfying \eqref{condition}, the \textbf{anisotropic curvature} of a convex curve $X$ is defined as (see \cite{ds})
$$\mathcal{K}=\phi(\theta)\kappa(\theta,t)=(\widetilde{p}^{''}(\theta)+\widetilde{p}(\theta))\kappa(\theta,t)$$
and the \textbf{anisotropic length}(or total interfacial energy) is defined as
\begin{equation}\label{anl}
\mathcal{L}=\int_{0}^{L}\widetilde{p}ds=\int^{2\pi}_{0}\frac{\widetilde{p}}{\kappa}d\theta.
\end{equation}
If $\widetilde{p}\equiv1$ then $\mathcal{L}$ is just the total length $L$ of curve $X$.
Furthermore, it is easy to see that the \textbf{anisotropic total curvature},
$$\int_{0}^{L}\mathcal{K}ds=\int_{0}^{L}(\widetilde{p}^{''}+\widetilde{p})\kappa ds=\int^{2\pi}_{0}\widetilde{p}d\theta=K_{0},$$
is a constant.

To state our main theorem, we now introduce the definition of Wulff shape(see \cite{ds,gu3}). For a given smooth function $\widetilde{p}(\theta)>0$ satisfying \eqref{condition}, the \textbf{Wulff shape} is defined as an intersection of halfspaces:
$$\widetilde{X}=\bigcap_{\theta\in S^{1}}\{\textbf{r}=(x_{1},x_{2})\,\,:\,\,-\textbf{r}\cdot\widetilde{\textbf{N}}\le \widetilde{p}(\theta)\}. $$
If the boundary $\partial\widetilde{X}$ of the Wulff shape $\widetilde{X}$ is smooth and it is parameterized by $\partial\widetilde{X}=\{\textbf{r}\,\,:\,\,\textbf{r}=-\widetilde{p}(\theta)\widetilde{\textbf{N}}+a(\theta)\widetilde{\textbf{T}},\,\theta\in S^{1}\}$, then it follows from the relation $d\theta=\widetilde{\kappa}ds$ that
$$\widetilde{\textbf{T}}=\frac{\partial\widetilde{X}}{\partial s}=(-\widetilde{p}^{'}(\theta)+a(\theta))\widetilde{\kappa}\widetilde{\textbf{N}}
+(\widetilde{p}(\theta)+a^{'}(\theta))\widetilde{\kappa}\widetilde{\textbf{T}}.$$
Hence $a(\theta)=\widetilde{p}^{'}(\theta)$ and $(\widetilde{p}^{''}(\theta)+\widetilde{p}(\theta))\widetilde{\kappa}=1$. Then the boundary $\partial\widetilde{X}$ can be parameterized as follows
$$\partial\widetilde{X}=\{\textbf{r}\,\,:\,\,\textbf{r}=-\widetilde{p}(\theta)\widetilde{\textbf{N}}
+\widetilde{p}^{'}(\theta)\widetilde{\textbf{T}},\,\theta\in S^{1}\},$$
and its support function is $\widetilde{p}(\theta)$. The curvature of $\partial\widetilde{X}$ is given by $\widetilde{\kappa}=(\widetilde{p}^{''}(\theta)+\widetilde{p}(\theta))^{-1}=\phi(\theta)^{-1}$. It means that the anisotropic curvature $\mathcal{K}$ of $\partial\widetilde{X}$ is a constant, i.e., $\mathcal{K}\equiv1$. Moreover, the area $\widetilde{A}$ of the Wulff shape $\widetilde{X}$ satisfies
\begin{equation}\label{wua}
\widetilde{A}=-\frac{1}{2}\int_{\partial\widetilde{X}}\textbf{r}\cdot\widetilde{\textbf{N}}ds=
\frac{1}{2}\int_{\partial\widetilde{X}}\widetilde{p}ds=\frac{1}{2}\int^{2\pi}_{0}
\frac{\widetilde{p}}{\widetilde{\kappa}}d\theta=\frac{1}{2}\int^{2\pi}_{0}(\widetilde{p}^{2}-\widetilde{p}^{2}_{\theta})d\theta.
\end{equation}
By \eqref{anl}, we also have $\widetilde{A}=\frac{1}{2}\mathcal{L}(\partial\widetilde{X})$. Obviously, $\widetilde{A}=\pi$ for the case $\widetilde{p}=1$. A Wulff shape $\widetilde{X}$ is called symmetric if
$$\widetilde{p}(\theta+\pi)=\widetilde{p}(\theta),\quad\theta\in S^{1}.$$
The main result of this paper is as follows.

\begin{theorem}\label{them}
Let $\widetilde{p}$ be a positive smooth $2\pi$-periodic function satisfying $\widetilde{p}_{\theta\theta}+\widetilde{p}>0$ and $\widetilde{p}(\theta+\pi)=\widetilde{p}(\theta)$. A smooth, closed, embedded, convex plane curve which evolves according to \eqref{flow} remains convex, preserves anisotropic length while increasing the area enclosed by the curve, and converges to a symmetric, convex curve that is a homothety of the boundary of Wulff shape $\widetilde{X}$ with support function $\widetilde{p}$ and curvature $\widetilde{\kappa}$ as time t goes to infinity in the $C^{\infty}$ metric.
\end{theorem}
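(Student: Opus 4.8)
The plan is to pass to the tangent-angle gauge, reduce \eqref{flow} to a scalar parabolic equation, extract the two structural identities that make the flow well behaved, and then run the standard convex-curve program: short-time existence, preserved and monotone quantities, uniform a priori estimates, long-time existence, and convergence. First I would reparametrize the convex curve by its tangent angle $\theta\in S^{1}$ and work with the support function $p(\theta,t)$. Under $\partial_t X=F\mathbf{N}_{in}$ with speed $F=\widetilde p(\mathcal K-\lambda/(2\widetilde A))$, the support function obeys $\partial_t p=-F$ and, since $\kappa=(p+p_{\theta\theta})^{-1}$, the curvature obeys $\partial_t\kappa=\kappa^{2}(F_{\theta\theta}+F)$, so that
\[
\partial_t\mathcal K=\phi\kappa^{2}\big(F_{\theta\theta}+F\big),\qquad F=\widetilde p\Big(\mathcal K-\tfrac{\lambda}{2\widetilde A}\Big),
\]
whose principal part is $\phi\widetilde p\,\kappa^{2}\,\mathcal K_{\theta\theta}$ with positive coefficient. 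Hence the equation for $p$ is quasilinear parabolic (using $\phi=\widetilde p_{\theta\theta}+\widetilde p>0$), and short-time existence follows from standard parabolic theory.

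Next I would record the two identities on which everything rests. Integrating by parts and using $\mathcal K/\kappa=\phi$ gives $\int_{0}^{2\pi}\widetilde p\phi\,d\theta=\int_{0}^{2\pi}(\widetilde p^{2}-\widetilde p_{\theta}^{2})\,d\theta=2\widetilde A$, and therefore $\int_{X}\widetilde p\mathcal K\,ds=2\widetilde A$ is constant along the flow. Differentiating $\mathcal L=\int_{0}^{2\pi}\widetilde p/\kappa\,d\theta$ and $A=\tfrac12\int_{0}^{2\pi}(p^{2}-p_{\theta}^{2})\,d\theta$ and substituting $\partial_t p=-F$, these identities yield
\[
\frac{d\mathcal L}{dt}=\int_{0}^{2\pi}\phi\,\partial_t p\,d\theta=-\lambda+\frac{\lambda}{2\widetilde A}\int_{0}^{2\pi}\widetilde p\phi\,d\theta=0,\qquad \frac{dA}{dt}=\frac{\lambda\mathcal L}{2\widetilde A}-2\widetilde A .
\]
By Cauchy--Schwarz, $(2\widetilde A)^{2}=\big(\int_{X}\widetilde p\mathcal K\,ds\big)^{2}\le\int_{X}\widetilde p\,ds\int_{X}\widetilde p\mathcal K^{2}\,ds=\mathcal L\,\lambda$, so $dA/dt\ge0$ with equality exactly when $\mathcal K$ is constant. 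Thus $\mathcal L$ is preserved and $A$ is non-decreasing. Since $\widetilde p$ is pinched between positive constants, the preserved $\mathcal L$ bounds the Euclidean length $L$ from above and below, and the isoperimetric inequality together with the monotone, hence bounded, $A$ controls the area; standard convex geometry then confines a suitably translated copy of the curve between two fixed disks, giving two-sided bounds on $p$.

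The heart of the argument, and the step I expect to be hardest, is the uniform-in-time curvature estimate together with control of the nonlocal factor $\lambda(t)$. Convexity is preserved because at a spatial minimum of $\mathcal K$ the evolution equation gives $\partial_t\mathcal K_{\min}\ge\mathcal K^{2}(\mathcal K-\lambda/(2\widetilde A))$, and since $\lambda/(2\widetilde A)=\int_{X}\widetilde p\mathcal K^{2}\,ds/\int_{X}\widetilde p\mathcal K\,ds$ is a weighted mean of $\mathcal K$, the inequality $\partial_t\mathcal K_{\min}\ge-\tfrac{\lambda}{2\widetilde A}\mathcal K_{\min}^{2}$ prevents $\mathcal K_{\min}$ from reaching zero in finite time. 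A lower bound $\lambda\ge(2\widetilde A)^{2}/\mathcal L$ is immediate, but the upper bound on $\lambda$ is coupled to an upper bound on $\mathcal K_{\max}$, since at a maximum the sign of $\mathcal K_{\max}-\lambda/(2\widetilde A)$ is unfavorable. I would break this circularity using the fatness bounds from the previous step: with $p$ pinched and $L,A$ controlled, a Tso/Andrews-type integral estimate (propagating a bound on $\int\mathcal K^{2}\,ds$ along the flow) yields a uniform upper bound for $\mathcal K$, hence for $\lambda$, after which the maximum principle upgrades the finite-time positivity of $\mathcal K_{\min}$ to a uniform lower bound. Once $\mathcal K$ is pinched between positive constants uniformly in time, differentiating the equation and applying Krylov--Safonov and Schauder estimates gives uniform $C^{\infty}$ bounds, so the flow exists for all $t\in[0,\infty)$.

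Finally, for convergence I would exploit that $A$ is non-decreasing and bounded, so $A\to A_{\infty}$ and $\int_{0}^{\infty}(dA/dt)\,dt<\infty$; because the uniform estimates make $dA/dt$ uniformly continuous, $dA/dt\to0$, forcing the deficit $\mathcal L\lambda-4\widetilde A^{2}\to0$ and hence $\mathcal K\to\text{const}$ in the weighted $L^{2}(\widetilde p\,ds)$ norm. The preserved identities pin the constant: $\mathcal K\equiv 2\widetilde A/\mathcal L$, that is $\kappa\equiv(2\widetilde A/\mathcal L)\widetilde\kappa$, which is exactly the curvature of the homothety $\tfrac{\mathcal L}{2\widetilde A}\,\partial\widetilde X$. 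Using the uniform $C^{\infty}$ bounds, Arzela--Ascoli gives subsequential $C^{\infty}$ convergence to a limit curve with $\mathcal K$ constant, necessarily such a homothety of $\partial\widetilde X$, which is symmetric because $\widetilde p(\theta+\pi)=\widetilde p(\theta)$. To promote this to convergence of the whole flow in $C^{\infty}$ I would establish exponential decay of $\|\mathcal K-2\widetilde A/\mathcal L\|$ and its derivatives by linearizing about the limit (a Lojasiewicz--Simon or direct spectral argument), the $\pi$-symmetry of $\widetilde p$ being used to control the lowest Fourier (translational) modes and thereby to single out a unique limiting homothety.
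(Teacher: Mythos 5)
Your overall architecture matches the paper's, and much of the plan is sound; in fact your derivation of $dA/dt=\frac{\lambda\mathcal L}{2\widetilde A}-2\widetilde A\ge 0$ from plain Cauchy--Schwarz is more elementary than the paper's route through the Wulff--Gage and Minkowski inequalities (though the paper's stronger chain $dA/dt\ge(\mathcal L^{2}-4\widetilde AA)/(2A)$ is what later produces exponential decay of the anisoperimetric deficit and, via the Bonnesen-type inequality, Hausdorff convergence). The genuine gap is at the uniform lower bound for $\mathcal K$. You claim that once $\lambda$ is uniformly bounded, ``the maximum principle upgrades the finite-time positivity of $\mathcal K_{\min}$ to a uniform lower bound.'' It does not: since $\lambda/(2\widetilde A)=\int\widetilde p\mathcal K^{2}ds\big/\int\widetilde p\mathcal K\,ds$ is a weighted mean of $\mathcal K$, the reaction term $\mathcal K^{2}\big(\mathcal K-\lambda/(2\widetilde A)\big)$ at a spatial minimum is nonpositive, and the best ODE comparison available is $y'\ge-cy^{2}$, whose solutions decay like $1/(ct)$. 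That keeps $\mathcal K_{\min}$ positive for all finite time (which is exactly how convexity preservation is proved) but gives no positive bound as $t\to\infty$, so an additional ingredient is required. The paper supplies it with a Bernstein-type gradient estimate $|\mathcal K_{\theta}|\le C_{1}$ uniform in time, which yields the Harnack-type bound $\log(\mathcal K_{\max}/\mathcal K_{\min})\le C_{1}\mathcal L(0)/\min_{S^{1}}(\widetilde p\phi)$; combined with $\mathcal K_{\max}\ge 2\pi\min_{S^{1}}(\widetilde p\phi)/\mathcal L(0)$, forced by the preserved anisotropic length, this pins $\mathcal K_{\min}$ away from zero.

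A second, smaller soft spot: your parenthetical description of the upper bound as ``propagating a bound on $\int\mathcal K^{2}\,ds$'' is not Tso's method and would not by itself yield a pointwise bound without a gradient estimate, which in the paper is derived only after the pointwise bound is in hand. What actually works, and what the paper does, is the pointwise Tso quantity $Q=\widetilde p\mathcal K/(p-\beta)$ with $p$ pinched via the inradius/outradius control you correctly set up; the maximum principle closes because the cubic term $-\beta\widetilde p^{2}\mathcal K^{3}/\big(\phi(p-\beta)^{2}\big)$ dominates. The remainder of your plan --- higher-order estimates, $A$ bounded and monotone forcing $dA/dt\to0$, vanishing of the Cauchy--Schwarz deficit, $\mathcal K\to2\widetilde A/\mathcal L(0)$, and identification of the limit as a homothety of $\partial\widetilde X$ --- coincides with the paper's Section 5; your extra Lojasiewicz--Simon step for the translational modes addresses a point the paper passes over quickly.
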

\begin{remark}
If $\widetilde{p}=1$(i.e., the target curve $\widetilde{X}$ is a unit circle) then the evolution
equation \eqref{flow} will become a length-preserving curve flow
\begin{equation}\label{fl}
  \left\{\begin{aligned}
    \frac{\partial X(u, t)}{\partial t}&=\big(\kappa-\frac{1}{2\pi}\int^{L}_{0}\kappa^{2}ds\big)\textbf{N}_{in}(u,t),\\
    X(u, 0)&=X_{0}(u),
  \end{aligned}
  \right.
  \end{equation}
which has been studied by Ma-Zhu in \cite{Ma-Zhu}.
\end{remark}
We point out that the local existence and uniqueness of the flow \eqref{flow} on $S^{1}\times[0,T)$ for some short time $T>0$ follows in the same way as in Theorem 4.3 in \cite{py}. By now, this part is standard and we omit the detail. The main purpose of this paper is to study the long-time behavior of the flow \eqref{flow}.

This paper is organized as follows. In section 3, we give some preparation lemmas on the monotonicity of some geometric quantities and convexity-preserving property of evolving curves. In section 4, the long time existence of the evolving curve is proved. In section 5 is devoted to the proof of the flow's convergence.
\section{Preparations}
Let $g(u,t)=\left|X_{u}\right|=(x^{2}_{u}+y^{2}_{u})^{\frac{1}{2}}$, then the differential of arc-length is $ds = g(u,t)du$,
which can also be expressed as
$\frac{\partial}{\partial s}=\frac{1}{g}\frac{\partial}{\partial u}$ and $\frac{\partial s}{\partial u}=g.$
  The tangent vector $\textbf{T}$, normal vector $\textbf{N}$, direction angle $\theta$, curvature $\kappa$, perimeter $L$ and area $A$ can be defined as the following equations:
  \begin{align*}
    T&=\frac{\partial X}{\partial s} = \frac{1}{g}\frac{\partial X}{\partial u}, \quad \textbf{N} = \frac{1}{\kappa}\frac{\partial \textbf{T}}{\partial s} = \frac{1}{\kappa g}\frac{\partial \textbf{T}}{\partial u},\qquad\,\theta = \angle (\textbf{T},x),\quad
    \kappa =\frac{\partial \theta}{\partial s}= \frac{1}{g} \frac{\partial \theta}{\partial u},\\
      A(t)&=\frac{1}{2}\int^{L}_{0}\,xdy-ydx=-\frac{1}{2}\int^{L}_{0} \langle X,\textbf{N}\rangle ds, \quad
   L(t)=\int^{b}_{a} g(u,t)du = \int^{L}_{0} ds.
  \end{align*}
Since changing the tangential component of the velocity vector only affects the parameter representation of the curve, which does not affect the geometric shapes of the evolving curve \cite{Gage-Hamilton,Chou-Zhu}. We can choose a suitable tangential component $\xi=-\frac{\partial}{\partial\theta}\left(\widetilde{p}\big(\mathcal{K}-\frac{\lambda(t)}{2\widetilde{A}}\big)\right)$ to make $\theta$ independent of $t$, which will simplify calculations on the curve. We now consider the following evolution equation, which is equivalent to \eqref{flow}
\begin{equation}\label{flow1}
  \left\{\begin{aligned}
    \frac{\partial X(\theta, t)}{\partial t}&=\xi(\theta,t) T(\theta,t)+\bigg(\widetilde{p}\big(\mathcal{K}-\frac{\lambda(t)}{2\widetilde{A}}\big)\bigg)\textbf{N}_{in}(\theta,t),\\
    X(\theta, 0)&=X_{0}(\theta).
  \end{aligned}
  \right.
  \end{equation}

Just as Gage, Hamilton and others have done, we can derive the evolution equations of the enclosed area $A(t)$, the curvature $\kappa$ and the support function $p$ of the evolving curve as follows.
\begin{lemma}
Under the flow \eqref{flow1}, the geometric quantities evolve as
\begin{align}
  \label{at}
  \frac{d A}{d t}&=-\int^{L}_{0}(\widetilde{p}(\mathcal{K}-\frac{\lambda(t)}{2\widetilde{A}}))ds,\\
  \label{kappat}
  \frac{\partial\kappa}{\partial\,t}&=\kappa^{2}\bigg[\big(\widetilde{p}(\mathcal{K}-\frac{\lambda(t)}{2\widetilde{A}})\big)_{\theta\theta}+
  \widetilde{p}(\mathcal{K}-\frac{\lambda(t)}{2\widetilde{A}})\bigg],\\
  \label{pt}
 \frac{\partial p}{\partial\,t}&=-\widetilde{p}(\mathcal{K}-\frac{\lambda(t)}{2\widetilde{A}}).
\end{align}
\end{lemma}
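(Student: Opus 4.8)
\section*{Proof proposal}

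The plan is to reduce the three evolution equations to elementary facts about the $\theta$-parameterized flow \eqref{flow1}: that the prescribed tangential speed $\xi$ freezes the tangent angle, that the support function then obeys a purely pointwise law, and that both $\kappa$ and $A$ are explicit functionals of $p$. Throughout I abbreviate the normal speed by $F=\widetilde{p}(\mathcal{K}-\frac{\lambda(t)}{2\widetilde{A}})$, so that the flow reads $X_{t}=\xi\textbf{T}+F\textbf{N}$ with $\xi=-F_{\theta}$.

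First I would work in an arbitrary parameter $u$ and record the two basic commutation formulas. Writing $g=|X_{u}|$ and differentiating $g^{2}=\langle X_{u},X_{u}\rangle$ in $t$, together with the Frenet relations $\textbf{T}_{s}=\kappa\textbf{N}$ and $\textbf{N}_{s}=-\kappa\textbf{T}$, gives the metric evolution $g_{t}=g(\xi_{s}-\kappa F)$. Differentiating $\textbf{T}=g^{-1}X_{u}$ in $t$ and comparing tangential and normal parts yields $\textbf{T}_{t}=(\xi\kappa+F_{s})\textbf{N}$, so that the tangent angle evolves by $\theta_{t}=\xi\kappa+F_{s}$. Since $d\theta=\kappa\,ds$ gives $\partial_{s}=\kappa\,\partial_{\theta}$ and hence $F_{s}=\kappa F_{\theta}$, the choice $\xi=-F_{\theta}$ makes $\theta_{t}=\kappa(\xi+F_{\theta})=0$. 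This is the step that justifies treating $\theta$ as independent of $t$, and it is the only place where the specific form of $\xi$ enters; I expect the sign bookkeeping in the moving Frenet frame to be the main thing to get right.

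With $\theta_{t}=0$, the inward normal $\textbf{N}=(-\sin\theta,\cos\theta)$ is time-independent, so $\textbf{N}_{t}=0$. Differentiating $p=-\langle X,\textbf{N}\rangle$ then gives $p_{t}=-\langle X_{t},\textbf{N}\rangle=-F$, which is \eqref{pt}. For \eqref{kappat} I would differentiate the identity $\kappa=(p+p_{\theta\theta})^{-1}$ directly; since $\theta$ and $t$ are now independent we have $(p_{\theta\theta})_{t}=(p_{t})_{\theta\theta}$, whence $\kappa_{t}=-\kappa^{2}\big(p_{t}+(p_{t})_{\theta\theta}\big)=\kappa^{2}(F_{\theta\theta}+F)$, which is exactly \eqref{kappat} after unpacking $F$.

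Finally, for \eqref{at} I would start from the area formula \eqref{a}, namely $A=\frac{1}{2}\int_{0}^{2\pi}(p^{2}-p_{\theta}^{2})\,d\theta$, whose $\theta$-domain is fixed in time. Differentiating under the integral and integrating the $p_{\theta}(p_{t})_{\theta}$ term by parts (the boundary terms cancel by $2\pi$-periodicity) collapses the integrand to $(p+p_{\theta\theta})p_{t}=\kappa^{-1}p_{t}$. Substituting $p_{t}=-F$ and converting back via $ds=\kappa^{-1}d\theta$ yields $\frac{dA}{dt}=-\int_{0}^{2\pi}\kappa^{-1}F\,d\theta=-\int_{0}^{L}F\,ds$, which is \eqref{at}. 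Apart from the initial frame computation, every step is a routine differentiation or integration by parts.
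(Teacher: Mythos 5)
Your proof is correct. Note that the paper itself gives no proof of this lemma—it simply appeals to the standard derivations of Gage--Hamilton—so your write-up supplies exactly the details being taken for granted, and it does so by the standard route: the specific choice $\xi=-F_{\theta}$ enters only to force $\theta_{t}=\kappa(\xi+F_{\theta})=0$, after which \eqref{pt} is immediate from $p=-\langle X,\textbf{N}\rangle$ with $\textbf{N}$ frozen, \eqref{kappat} follows by differentiating $\kappa=(p+p_{\theta\theta})^{-1}$ in $t$ (legitimate since $\partial_{t}$ and $\partial_{\theta}$ now commute), and \eqref{at} follows from \eqref{a} by differentiation under the integral and one integration by parts. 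All the intermediate formulas you record ($g_{t}=g(\xi_{s}-\kappa F)$, $\textbf{T}_{t}=(\xi\kappa+F_{s})\textbf{N}$, and the sign conventions for the inward normal) are correct, so the argument is complete as written.
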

Now the problem \eqref{flow1} could be reformulated through the evolution equation of curvature $\kappa(\theta,t)$
\begin{equation}\label{kt}
  \left\{\begin{aligned}
    \frac{\partial\kappa}{\partial t}=&\kappa^{2}\big[\widetilde{p}\mathcal{K}_{\theta\theta}+2\widetilde{p}_{\theta}\mathcal{K}_{\theta}+
    (\widetilde{p}_{\theta\theta}+\widetilde{p})(\mathcal{K}-\frac{\lambda(t)}{2\widetilde{A}})\big],\\
    \kappa(\theta, 0)&=\kappa_{0}(\theta)>0.
  \end{aligned}
  \right.
  \end{equation}
where $\kappa_{0}(\theta)$ is the curvature of $X_{0}$.
Since $\mathcal{K}=(\widetilde{p}_{\theta\theta}+\widetilde{p})\kappa=\phi\kappa,$ the problem \eqref{kt} could be written as
\begin{equation}\label{mkt}
  \left\{\begin{aligned}
    \frac{\partial\mathcal{K}}{\partial t}=&\frac{\mathcal{K}^{2}}{\phi}\big[\widetilde{p}\mathcal{K}_{\theta\theta}+2\widetilde{p}_{\theta}\mathcal{K}_{\theta}+
    (\widetilde{p}_{\theta\theta}+\widetilde{p})(\mathcal{K}-\frac{\lambda(t)}{2\widetilde{A}})\big],\\
    \mathcal{K}(\theta, 0)&=\mathcal{K}_{0}(\theta)>0,
  \end{aligned}
  \right.
  \end{equation}
\begin{lemma}\label{ala}
Under the flow\eqref{flow1}, the anisotropic length $\mathcal{L}(t)$ of evolving curve is preserved and the enclosed area $A(t)$ is non-decreasing during the evolution process.
\end{lemma}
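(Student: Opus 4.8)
The plan is to differentiate the two geometric quantities directly, exploiting the evolution equations \eqref{at} and \eqref{kappat} together with the special algebraic structure built into the speed.

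First, for the anisotropic length I would write $\mathcal{L}(t)=\int_{0}^{2\pi}\widetilde{p}\kappa^{-1}\,d\theta$ and differentiate under the integral sign. Since the parametrization \eqref{flow1} keeps $\theta$ (and hence $\widetilde{p}(\theta)$) independent of $t$, only $\kappa$ varies, giving $\frac{d\mathcal{L}}{dt}=-\int_{0}^{2\pi}\widetilde{p}\kappa^{-2}\kappa_{t}\,d\theta$. Substituting \eqref{kappat} cancels the factor $\kappa^{2}$ and leaves $\frac{d\mathcal{L}}{dt}=-\int_{0}^{2\pi}\widetilde{p}\,[\Psi_{\theta\theta}+\Psi]\,d\theta$, where $\Psi:=\widetilde{p}(\mathcal{K}-\lambda/2\widetilde{A})$. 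Integrating by parts twice and using $2\pi$-periodicity to discard the boundary terms moves the two derivatives onto $\widetilde{p}$, so that $\frac{d\mathcal{L}}{dt}=-\int_{0}^{2\pi}(\widetilde{p}_{\theta\theta}+\widetilde{p})\Psi\,d\theta=-\int_{0}^{2\pi}\phi\widetilde{p}(\mathcal{K}-\lambda/2\widetilde{A})\,d\theta$.

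The key step is to recognize two identities that make this integral vanish. Rewriting the definition of $\lambda$ in the $\theta$ variable with $\mathcal{K}=\phi\kappa$ gives $\lambda=\int_{0}^{L}\widetilde{p}\mathcal{K}^{2}\,ds=\int_{0}^{2\pi}\widetilde{p}\phi\mathcal{K}\,d\theta$, so the first term equals $\lambda$. For the second term, one integration by parts together with \eqref{wua} yields $\int_{0}^{2\pi}\phi\widetilde{p}\,d\theta=\int_{0}^{2\pi}(\widetilde{p}^{2}-\widetilde{p}_{\theta}^{2})\,d\theta=2\widetilde{A}$. Hence $\frac{d\mathcal{L}}{dt}=-\big(\lambda-\frac{\lambda}{2\widetilde{A}}\cdot 2\widetilde{A}\big)=0$, which is precisely why the nonlocal term $\lambda/2\widetilde{A}$ was chosen with that normalization. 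For the area I would start from \eqref{at}, convert to a $\theta$-integral, and split it as $\frac{dA}{dt}=-\int_{0}^{2\pi}\frac{\widetilde{p}}{\kappa}(\mathcal{K}-\frac{\lambda}{2\widetilde{A}})\,d\theta=-\int_{0}^{2\pi}\widetilde{p}\phi\,d\theta+\frac{\lambda}{2\widetilde{A}}\int_{0}^{2\pi}\frac{\widetilde{p}}{\kappa}\,d\theta$, using $\mathcal{K}/\kappa=\phi$. The first integral is again $2\widetilde{A}$ and the second is $\mathcal{L}$, so $\frac{dA}{dt}=\frac{\lambda\mathcal{L}-4\widetilde{A}^{2}}{2\widetilde{A}}$. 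The remaining point is the sign: applying the Cauchy--Schwarz inequality with weight $\widetilde{p}\,ds$ gives $\big(\int_{0}^{L}\widetilde{p}\mathcal{K}\,ds\big)^{2}\le\int_{0}^{L}\widetilde{p}\mathcal{K}^{2}\,ds\int_{0}^{L}\widetilde{p}\,ds=\lambda\mathcal{L}$, and since $\int_{0}^{L}\widetilde{p}\mathcal{K}\,ds=\int_{0}^{2\pi}\widetilde{p}\phi\,d\theta=2\widetilde{A}$, we obtain $4\widetilde{A}^{2}\le\lambda\mathcal{L}$, whence $\frac{dA}{dt}\ge 0$, with equality only when $\mathcal{K}$ is constant, i.e. on the Wulff shape.

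The computations are routine once the $\theta$-parametrization is fixed; the only genuinely delicate point is the bookkeeping in the two integration-by-parts identities and the realization that the definitions of $\lambda$ and $\widetilde{A}$ conspire through \eqref{wua} and Cauchy--Schwarz. I expect no analytic obstacle here, since smoothness and strict convexity ($\kappa>0$) are available on the maximal existence interval.
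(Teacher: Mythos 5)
Your proposal is correct. The proof that $\mathcal{L}$ is preserved follows essentially the same route as the paper: differentiate $\mathcal{L}=\int_0^{2\pi}\widetilde{p}\kappa^{-1}d\theta$, substitute the curvature evolution, integrate by parts to reduce to $-\int_0^{2\pi}\widetilde{p}\phi(\mathcal{K}-\lambda/2\widetilde{A})\,d\theta$, and then use the identity $\int_0^L\widetilde{p}\mathcal{K}\,ds=\int_0^{2\pi}(\widetilde{p}^2-\widetilde{p}_\theta^2)\,d\theta=2\widetilde{A}$ (the paper's \eqref{ta}) to see the cancellation. Where you genuinely diverge is the monotonicity of $A$. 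The paper starts from the same expression $\frac{dA}{dt}=-2\widetilde{A}+\frac{\mathcal{L}}{2\widetilde{A}}\int_0^L\widetilde{p}\mathcal{K}^2ds$ but then invokes the Wulff--Gage inequality \eqref{wgi} (which requires the centro-symmetry hypothesis $\widetilde{p}(\theta+\pi)=\widetilde{p}(\theta)$) followed by the Minkowski inequality \eqref{mi} to conclude $\frac{dA}{dt}\ge\frac{\mathcal{L}^2-4\widetilde{A}A}{2A}\ge0$. You instead write $\frac{dA}{dt}=\frac{\lambda\mathcal{L}-4\widetilde{A}^2}{2\widetilde{A}}$ and bound $\lambda\mathcal{L}\ge\big(\int_0^L\widetilde{p}\mathcal{K}\,ds\big)^2=4\widetilde{A}^2$ by Cauchy--Schwarz with weight $\widetilde{p}\,ds$. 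This is more elementary, avoids both geometric inequalities, and in particular does not use the symmetry of $\widetilde{p}$, so for this lemma your argument is slightly more general; it also identifies the equality case ($\mathcal{K}$ constant) cleanly, which is exactly the observation the paper reuses later in the convergence argument of Lemma \ref{cwu}. The trade-off is that the paper's route yields the intermediate bound $\frac{dA}{dt}\ge\frac{\mathcal{L}^2-4\widetilde{A}A}{2A}$, which is what drives the exponential decay of the anisoperimetric deficit in Section 5, so the Wulff--Gage inequality cannot be dispensed with globally --- only for this particular lemma.
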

\begin{proof}
By \eqref{kt}, \eqref{anl} and integration by parts, we have
\begin{align*}
\frac{d\mathcal{L}}{dt}&=\int^{2\pi}_{0}-\frac{\widetilde{p}}{\kappa^{2}}\kappa^{2}\big[\widetilde{p}
\mathcal{K}_{\theta\theta}+2\widetilde{p}_{\theta}\mathcal{K}_{\theta}+
    (\widetilde{p}_{\theta\theta}+\widetilde{p})(\mathcal{K}-\frac{\lambda(t)}{2\widetilde{A}})\big]d\theta\\
    &=-\int^{2\pi}_{0}\widetilde{p}^{2}\mathcal{K}_{\theta\theta}+2\widetilde{p}\widetilde{p}_{\theta}\mathcal{K}_{\theta}
    +\widetilde{p}(\widetilde{p}_{\theta\theta}+\widetilde{p})(\mathcal{K}-\frac{\lambda(t)}{2\widetilde{A}})d\theta\\
    &=-\int^{2\pi}_{0}\frac{\widetilde{p}\mathcal{K}}{\kappa}(\mathcal{K}-\frac{\lambda(t)}{2\widetilde{A}})d\theta\\
    &=-\int_{0}^{L}\widetilde{p}\mathcal{K}^{2}ds+\frac{\int_{0}^{L}\widetilde{p}\mathcal{K}ds}{2\widetilde{A}}\int_{0}^{L}\widetilde{p}\mathcal{K}^{2}ds.
\end{align*}
Notice that
\begin{equation}\label{ta}
\int_{0}^{L}\widetilde{p}\mathcal{K}ds=\int_{0}^{L}\widetilde{p}(\widetilde{p}_{\theta\theta}+\widetilde{p})\kappa ds=
\int^{2\pi}_{0}\widetilde{p}(\widetilde{p}_{\theta\theta}+\widetilde{p})d\theta=\int^{2\pi}_{0}
(\widetilde{p}^{2}-\widetilde{p}^{2}_{\theta})d\theta=2\widetilde{A}.
\end{equation}
Thus we have$\frac{d\mathcal{L}}{dt}=0$. Next, we recall a useful geometric inequality. Let $\widetilde{p}$ be a given positive smooth function satisfying conditions \eqref{condition} and $\widetilde{p}(\theta+\pi)=\widetilde{p}(\theta)$. Then for an embedded closed convex curve $X$, its anisotropic curvature $\mathcal{K}$, enclosed area $A$ and anisotropic length $\mathcal{L}$ satisfy(see \cite{Green-Osher})
\begin{equation}\label{wgi}
\int_{0}^{L}\widetilde{p}\mathcal{K}^{2}ds\ge\frac{\widetilde{A}\mathcal{L}}{A}\quad\text{(Wulff-Gage inequality)}.
\end{equation}
Here, we remark that a centro-symmetric condition $\widetilde{p}(\theta+\pi)=\widetilde{p}(\theta)$ is required for \eqref{wgi} to hold. By \eqref{at}, we calculate
\begin{align*}
\frac{dA}{dt}&=-\int_{0}^{L}\widetilde{p}\mathcal{K}ds+\frac{\int_{0}^{L}\widetilde{p}ds}{2\widetilde{A}}\int_{0}^{L}\widetilde{p}\mathcal{K}^{2}ds\\
&=-2\widetilde{A}+\frac{\mathcal{L}}{2\widetilde{A}}\int_{0}^{L}\widetilde{p}\mathcal{K}^{2}ds\\
&\ge\frac{\mathcal{L}^{2}}{2A}-2\widetilde{A}=\frac{\mathcal{L}^{2}-4\widetilde{A}A}{2A}\ge0.
\end{align*}
where we have used the Minkowski inequality(see \cite{Gage93})
\begin{equation}\label{mi}
\mathcal{L}^{2}\ge4\widetilde{A}A
\end{equation}
in the last inequality.
\end{proof}
\begin{remark}(The lower bound of $\lambda(t)$)
By \eqref{wgi} and \eqref{mi}, we can get
$$\int_{0}^{L}\widetilde{p}\mathcal{K}^{2}ds\ge\widetilde{A}\mathcal{L}\cdot\frac{4\widetilde{A}}{\mathcal{L}^{2}}=
\frac{4\widetilde{A}^{2}}{\mathcal{L}}=\frac{4\widetilde{A}^{2}}{\mathcal{L}(0)}>0.$$
\end{remark}

\begin{lemma}\label{conv}
A strictly convex curve evolving according to \eqref{flow1} remains so during the evolution process.
\end{lemma}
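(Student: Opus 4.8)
The goal is to show that strict convexity, equivalently $\kappa>0$ (or $\mathcal{K}>0$), is preserved under the flow. The plan is to apply the parabolic maximum principle to the evolution equation \eqref{mkt} for the anisotropic curvature $\mathcal{K}$, since $\mathcal{K}=\phi\kappa$ and $\phi=\widetilde{p}_{\theta\theta}+\widetilde{p}>0$ is a fixed positive function, so $\mathcal{K}>0$ is equivalent to $\kappa>0$. Rewriting \eqref{mkt} as
\begin{equation*}
\frac{\partial\mathcal{K}}{\partial t}=\frac{\mathcal{K}^{2}\widetilde{p}}{\phi}\mathcal{K}_{\theta\theta}+\frac{2\mathcal{K}^{2}\widetilde{p}_{\theta}}{\phi}\mathcal{K}_{\theta}+\mathcal{K}^{2}\big(\mathcal{K}-\tfrac{\lambda(t)}{2\widetilde{A}}\big),
\end{equation*}
I would view this as a quasilinear parabolic PDE for $\mathcal{K}$ on $S^{1}\times[0,T)$ whose leading coefficient $\mathcal{K}^{2}\widetilde{p}/\phi$ is nonnegative (strictly positive wherever $\mathcal{K}>0$), guaranteeing parabolicity along the solution.

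First I would note that at $t=0$ the curve is strictly convex, so $\mathcal{K}_{0}(\theta)>0$ on $S^{1}$; set $m_{0}=\min_{S^{1}}\mathcal{K}_{0}>0$. Next I would track the spatial minimum $m(t)=\min_{\theta\in S^{1}}\mathcal{K}(\theta,t)$. At a point $\theta_{t}$ where this minimum is attained one has $\mathcal{K}_{\theta}=0$ and $\mathcal{K}_{\theta\theta}\ge0$, so the first two terms on the right-hand side are nonnegative and Hamilton's trick (or the standard argument via the parabolic maximum principle) yields, in the barrier/viscosity sense,
\begin{equation*}
\frac{d m}{d t}\ge m^{2}\Big(m-\frac{\lambda(t)}{2\widetilde{A}}\Big).
\end{equation*}
This ODE differential inequality is the heart of the argument.

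The main obstacle is that the zeroth-order term contains the nonlocal factor $\lambda(t)=\int_{X(\cdot,t)}\widetilde{p}\mathcal{K}^{2}\,ds$, which is positive, so the right-hand side of the comparison inequality is not automatically nonnegative and a naive maximum principle does not immediately give a positive lower bound; one cannot simply invoke $\mathcal{K}_{t}\ge0$ at a spatial minimum. I would handle this by comparing $m(t)$ with the solution of the scalar ODE $\varphi'=\varphi^{2}(\varphi-\tfrac{\lambda(t)}{2\widetilde{A}})$, $\varphi(0)=m_{0}$, and arguing that $\varphi(t)$ cannot reach $0$ in finite time: indeed if $\varphi$ were ever to approach $0$, the factor $\varphi^{2}$ forces the right-hand side to vanish to second order, so $0$ acts as a lower barrier that $\varphi$ cannot cross in finite time. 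Thus $m(t)>0$ is preserved on $[0,T)$, giving $\mathcal{K}>0$ and hence $\kappa>0$ throughout the evolution.

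To make the last point rigorous I would use the upper bound on $\lambda(t)$ available on any finite interval $[0,T_{0}]\subset[0,T)$ — which follows because the flow is smooth on compact time subintervals, so $\mathcal{K}$, and therefore $\lambda(t)$, stays bounded there — to fix a constant $\Lambda$ with $\tfrac{\lambda(t)}{2\widetilde{A}}\le\Lambda$ on $[0,T_{0}]$, and then compare with $\psi'=-\Lambda\psi^{2}$, $\psi(0)=m_{0}$, whose explicit solution $\psi(t)=m_{0}/(1+\Lambda m_{0}t)>0$ gives an explicit positive lower bound $\mathcal{K}(\theta,t)\ge m_{0}/(1+\Lambda m_{0}t)$ and hence strict convexity on each $[0,T_{0}]$, and so on all of $[0,T)$.
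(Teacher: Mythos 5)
Your proof is correct and rests on the same underlying mechanism as the paper's: a maximum-principle argument applied to the evolution equation \eqref{mkt} for $\mathcal{K}$ on compact time subintervals, where boundedness of the relevant quantities is free because the solution is smooth up to any $T_{0}<T$. The execution differs in a way worth noting. The paper rewrites \eqref{mkt} as $\mathcal{K}_{t}=a\,\mathcal{K}_{\theta\theta}+b\,\mathcal{K}$ with $b=\frac{\mathcal{K}}{\phi}\bigl[2\widetilde{p}_{\theta}\mathcal{K}_{\theta}+\phi\bigl(\mathcal{K}-\frac{\lambda(t)}{2\widetilde{A}}\bigr)\bigr]$, bounds $\lvert b\rvert\le D$ (which forces it to invoke the gradient estimate of Lemma \ref{graest} to control $\mathcal{K}_{\theta}$), and applies the maximum principle to $U=\mathcal{K}e^{Dt}$, obtaining the exponential lower bound $\min\mathcal{K}(\cdot,t)\ge e^{-Dt}\min\mathcal{K}(\cdot,0)$. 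You instead evaluate at the spatial minimum, where the first-order term vanishes identically, so you never need a bound on $\mathcal{K}_{\theta}$; keeping the quadratic structure of the reaction term yields $m'\ge-\Lambda m^{2}$ and the algebraic barrier $m_{0}/(1+\Lambda m_{0}t)$. Your route is slightly cleaner in that it removes the forward reference to Lemma \ref{graest} (which is only established in Section 4) and any appearance of circularity; the points to make fully precise are Hamilton's trick (that $m(t)$ is Lipschitz and satisfies the differential inequality almost everywhere) and the continuity argument ensuring $m\ge0$ where you discard the $m^{3}$ term, both of which you flag and both of which are standard.
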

\begin{proof}
Suppose the flow exists in time interval $[0,T)$ and there is a smallest $t_{1}\in(0,T)$ such that $\kappa$ touches zero at $t_{1}$, that is,
$$t_{1}=\inf\{t:\kappa(\theta,t)=0\,\,\text{for some}\,\,\theta\in S^{1}\}.$$
We rewrite the equation \eqref{mkt} of $\mathcal{K}$ as
$$\mathcal{K}_{t}=a(\theta,t)\mathcal{K}_{\theta\theta}+b(\theta,t)\mathcal{K},\quad(\theta,t)\in S^{1}\times(0,t_{1}),$$
where $a(\theta,t)=\frac{\widetilde{p}\mathcal{K}^{2}}{\phi}$ and $b(\theta,t)=\frac{\mathcal{K}}{\phi}[2\widetilde{p}_{\theta}\mathcal{K}_{\theta}+
    (\widetilde{p}_{\theta\theta}+\widetilde{p})(\mathcal{K}-\frac{\lambda(t)}{2\widetilde{A}})]$. Since $t_{1}<T,$ there exists a constant $C(t_{1})$ such that $0\le\mathcal{K}(\theta,t)\le C(t_{1})$ on $S^{1}\times(0,t_{1})$. Combining this with the bound estimate for $\mathcal{K}_{\theta}$ (see Lemma \ref{graest}), we can find a constant $D(t_{1})$ such that $\lvert b(\theta,t)\lvert\le D(t_{1})$ on $S^{1}\times(0,t_{1})$.

    Set $U(\theta,t)=\mathcal{K}(\theta,t)e^{Dt}$. Using the maximum principle argument to the equation of
    $U(\theta,t)$ on $S^{1}\times[0,t_{1})$, one can obtain $\min_{S^{1}}U(\theta,t)\ge\min_{S^{1}}U(\theta,0)$ for $t\in(0,t_{1})$. As a result,
    $$\min_{S^{1}}\mathcal{K}(\theta,t_{1})\ge
    e^{-Dt}\min_{S^{1}}U(\theta,0)=e^{-Dt}\min_{S^{1}}\mathcal{K}(\theta,0).$$
    This is a contraction. Thus we have $\kappa(\theta,t)>0$ on $S^{1}\times[0,T)$.
\end{proof}

\section{Global existence of the flow}
\subsection{Uniform upper bound of the curvature; Tso's method}
In this section, we shall use the support function method to derive a time-independent bound of the curvature.
Given a convex curve $X$ in the plane, the Bonnesen inequality (\cite{Osserman-Bonnesn}) says that
\begin{equation*}
\rho L-A-\pi\rho^{2}\ge0,\quad r_{in}\le\rho\le r_{out},
\end{equation*}
where $r_{in}$ and $r_{out}$ are the inradius and outradius of the domain bounded by $X$, respectively. So
\begin{equation}\label{r1}
0<\frac{L-\sqrt{L^{2}-4\pi A}}{2\pi}\le r_{in}\le r_{out}\le\frac{L+\sqrt{L^{2}-4\pi A}}{2\pi}.
\end{equation}
Since the anisotropic length $\mathcal{L}(t)$ is preserved in time, we have
\begin{equation}\label{r2}
L+\sqrt{L^{2}-4\pi A}\le2L\le M_{3}\int_{0}^{L}\widetilde{p}ds=M_{3}\mathcal{L}(t)=M_{3}\mathcal{L}(0),
\end{equation}
where $M_{3}$ are the maximum of functions $\frac{2}{\widetilde{p}(\theta)}$. Combining this with the fact that the area $A(t)$ is increasing, we get
\begin{equation}\label{r3}
L-\sqrt{L^{2}-4\pi A}=\frac{4\pi A}{L+\sqrt{L^{2}-4\pi A}}\ge\frac{4\pi A(0)}{M_{3}\mathcal{L}(0)}.
\end{equation}
By \eqref{r1}, \eqref{r2} and \eqref{r3}, we have
\begin{equation}
0\le\frac{2A(0)}{M_{3}\mathcal{L}(0)}\le r_{in}\le r_{out}\le\frac{M_{3}\mathcal{L}(0)}{2\pi}.
\end{equation}
which implies that both the inradius and the outradius have time-independent positive bounds. Now we can show that the anisotropic curvature of the evolving curve has an upper bound independent of time.

\begin{lemma}\label{kupj}
Under the flow \eqref{flow1}, there exists a constant $C_{0}$ independent of time such that
\begin{equation}\label{kfij}
0<\mathcal{K}(\theta,t)\le C_{0},\quad\forall(\theta,t)\in S^{1}\times[0,T).
\end{equation}
\end{lemma}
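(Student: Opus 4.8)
The plan is to run Tso's technique: control the anisotropic curvature through the quotient of $\mathcal{K}$ by a shifted support function of the evolving curve, and then close a Riccati-type differential inequality with the maximum principle. The estimates established just above give time-independent constants $0<c_{1}\le r_{in}\le r_{out}\le R_{0}$; since the inradius stays bounded below, I would keep the origin inside the region enclosed by $X(\cdot,t)$ with a uniform margin, so that the support function obeys $0<c_{1}\le p(\theta,t)\le R_{0}$ on $S^{1}\times[0,T)$. Fixing a constant $c$ with $0<c<c_{1}$, the quantity $w:=p-c$ is then pinched between the positive constants $c_{1}-c$ and $R_{0}-c$, and I study
$$ Q(\theta,t)=\frac{\mathcal{K}(\theta,t)}{p(\theta,t)-c}. $$
Because $\mathcal{K}=Q\,w\le (R_{0}-c)\,Q$, a time-independent upper bound for $\max_{\theta}Q$ yields \eqref{kfij}, while the positivity $\mathcal{K}>0$ is already furnished by Lemma \ref{conv}.

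Using \eqref{mkt} for $\mathcal{K}$ and \eqref{pt} for $p$ (so that $\partial_{t}w=-\widetilde{p}(\mathcal{K}-\tfrac{\lambda}{2\widetilde{A}})$), together with the identity $p+p_{\theta\theta}=\kappa^{-1}=\phi/\mathcal{K}$ to eliminate $p_{\theta\theta}$, I would derive an evolution equation of the form
$$ \frac{\partial Q}{\partial t}=\frac{\widetilde{p}\,\mathcal{K}^{2}}{\phi}\,Q_{\theta\theta}+b(\theta,t)\,Q_{\theta}+R(\theta,t), $$
whose diffusion coefficient $\widetilde{p}\mathcal{K}^{2}/\phi$ is positive. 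Writing $\mu(t)=\lambda(t)/(2\widetilde{A})\ge 0$, the reaction $R$ is, after the substitution, a cubic polynomial in $Q$ with coefficients built from the bounded data $\widetilde{p},\widetilde{p}_{\theta},\phi,p,p_{\theta}$ and from $\mu$. Evaluating at a spatial maximum $\theta_{t}$ of $Q(\cdot,t)$, where $Q_{\theta}=0$ and $Q_{\theta\theta}\le 0$, the diffusion and transport terms are discarded and I obtain $\tfrac{d}{dt}\max_{\theta}Q\le R(\theta_{t},t)$.

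In the isotropic model $\widetilde{p}\equiv 1$ this closes cleanly: the cubic terms coming from the reaction $\mathcal{K}^{2}(\mathcal{K}-\mu)$ and from the $p_{\theta\theta}$-substitution in the diffusion combine into the single strictly negative term $-c\,w\,Q^{3}\le -c_{\ast}Q^{3}$ with $c_{\ast}=c(c_{1}-c)>0$, while $\mu$ enters only through terms of favourable (nonpositive) sign, so that no upper control on $\lambda(t)$ is needed. One is left with
$$ \frac{d}{dt}\Big(\max_{\theta}Q\Big)\le -c_{\ast}\Big(\max_{\theta}Q\Big)^{3}+C_{1}\Big(\max_{\theta}Q\Big)^{2}+C_{2}\max_{\theta}Q,\qquad c_{\ast}>0, $$
whose right-hand side is negative once $\max_{\theta}Q$ is large; hence $\max_{\theta}Q$ cannot exceed $\max\{\max_{\theta}Q(\cdot,0),\,C_{0}'\}$ for a time-independent $C_{0}'$, and \eqref{kfij} follows.

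The step I expect to be the genuine obstacle is the anisotropy. The transport term $2\widetilde{p}_{\theta}\mathcal{K}_{\theta}$ in \eqref{mkt} does \emph{not} vanish at the maximum of $Q$ (there $\mathcal{K}_{\theta}=Q\,p_{\theta}\neq 0$ even though $Q_{\theta}=0$), and it produces an \emph{extra} cubic contribution of the form $\tfrac{2\widetilde{p}_{\theta}p_{\theta}\,w}{\phi}\,Q^{3}$, whose coefficient has indefinite sign and is of the same order as the good damping $-c_{\ast}Q^{3}$; thus the naive quotient does not obviously close in the anisotropic setting. To overcome this I would either exploit the divergence form $\widetilde{p}\mathcal{K}_{\theta\theta}+2\widetilde{p}_{\theta}\mathcal{K}_{\theta}=\widetilde{p}^{-1}(\widetilde{p}^{2}\mathcal{K}_{\theta})_{\theta}$ to recast the operator after a reparametrization in which the first-order coupling is removed, or replace the shift $p-c$ by an anisotropy-adapted denominator $p-\psi(\theta)$ tuned to the Wulff datum $\widetilde{p}$, chosen so that the curve-dependent cubic terms are dominated by the damping. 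Verifying that such a modification yields a strictly negative leading coefficient, uniformly over the admissible bounded range of $p$ and $p_{\theta}$, is the heart of the matter; by comparison, the choice of origin ensuring $p\ge c_{1}>0$ is a minor point resting on the inradius bound obtained above.
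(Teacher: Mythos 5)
Your setup is the right one (Tso's method with the quotient of the curvature by a shifted support function), and your diagnosis of the obstruction is accurate: with $Q=\mathcal{K}/(p-c)$ the transport term $2\widetilde{p}_{\theta}\mathcal{K}_{\theta}$ does not vanish at a spatial maximum of $Q$ and contributes a cubic term $\tfrac{2\widetilde{p}_{\theta}p_{\theta}(p-c)}{\phi}Q^{3}$ of indefinite sign that can overwhelm the damping $-c(c_{1}-c)Q^{3}$. But you stop exactly there: you name two possible repairs and explicitly leave the verification open, so the anisotropic case --- which is the whole content of the lemma --- is not proved. The paper's resolution is the first-order version of your second suggestion, applied to the \emph{numerator} rather than the denominator: it takes
$$Q=\frac{\widetilde{p}\,\mathcal{K}}{p-\beta}.$$
The point is that the normal speed is $\widetilde{p}(\mathcal{K}-\tfrac{\lambda}{2\widetilde{A}})$, so the curvature equation reads $\mathcal{K}_{t}=\tfrac{\mathcal{K}^{2}}{\phi}\bigl[(\widetilde{p}\mathcal{K})_{\theta\theta}+\widetilde{p}\mathcal{K}-\tfrac{\lambda\phi}{2\widetilde{A}}\bigr]$: the quantity $v=\widetilde{p}\mathcal{K}$ is the one whose \emph{pure} second derivative appears, with no first-order term in $v_{\theta}$ other than what can be absorbed into $Q_{\theta}$. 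With this choice the evolution of $Q$ takes the form $Q_{t}=\tfrac{\widetilde{p}\mathcal{K}^{2}}{\phi}Q_{\theta\theta}+\tfrac{2p_{\theta}\widetilde{p}\mathcal{K}^{2}}{\phi(p-\beta)}Q_{\theta}+(\text{reaction})$, the $\lambda$-terms enter with a favourable sign and are discarded, and after substituting $p_{\theta\theta}+p=1/\kappa$ and using $p-\beta\ge\beta$ the reaction is bounded by $Q^{2}\bigl(2-\tfrac{\beta^{2}}{\widetilde{p}\phi}Q\bigr)$, which closes the Riccati argument exactly as in your isotropic computation. So the missing idea is a one-line modification, but without it your argument only establishes the case $\widetilde{p}\equiv1$.

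A secondary gap: you assert $0<c_{1}\le p(\theta,t)\le R_{0}$ on all of $S^{1}\times[0,T)$ for a fixed origin, citing only the inradius bound. The inradius bound does not by itself keep a \emph{fixed} origin uniformly inside the evolving curve, since the incenter may drift. The paper handles this by placing the origin at the incenter of $X(\cdot,0)$ and comparing with a disk shrinking under $\partial_{t}B=M_{4}\kappa N_{in}$ (which the evolving curve encloses by the maximum principle), yielding $p\ge 2\beta$ only on a time window $[0,T_{0}]$ of definite length; the resulting curvature bound is then independent of the window because the inradius bound is uniform. You should either reproduce such a comparison argument or otherwise justify the uniform positive lower bound on $p$.
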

\begin{proof}
Let $B(0)$ be the inscribed circle of $X(\theta, 0)$ with radius $r(0)=r_{in}(0)$. We shrink $B(0)$ by the contraction flow
\begin{equation}
\frac{\partial B}{\partial t}=M_{4}\kappa N_{in},
\end{equation}
where $M_{4}$ is the maximum of $\widetilde{p}(\theta)\phi(\theta)$.  The radius $r(t)$ of
$B(t)$ is given by
\begin{equation}
r(t)=\sqrt{r^{2}(0)-2M_{4}t},\quad t\in[0,min\{r^{2}(0)/2M_{4},T\}).
\end{equation}
By the maximum principle, $B(t)$ is enclosed by $X(\theta, t)$.
If the center of $B(0)$ is chosen to be the origin $O$, then the support function $p(\theta, t)$ (with respective to $O$) of $X(\theta, t)$ satisfies
\begin{equation}\label{pj}
p(\theta, t)\ge\sqrt{r^{2}(0)-2M_{4}t},\quad (\theta,t)\in S^{1}\times[0,min\{r^{2}(0)/2M_{4},T\}).
\end{equation}
where $r(0)\ge\frac{2A(0)}{M_{3}\mathcal{L}(0)}$. Set $T_{0}=\frac{1}{2}min\{r^{2}(0)/2M_{4},T\}$. By \eqref{pj}, we have $p(\theta, t)\ge2\beta$ on $S^{1}\times[0,T_{0}]$, where $\beta>0$ is a constant only depending on the initial curve $X(\theta, 0)$, given by
\begin{equation}
\beta=\frac{\sqrt{2}A(0)}{2M_{3}\mathcal{L}(0)}.
\end{equation}
Moreover, since the anisotropic length of evolving curve is preserved, one has an upper bound of the support function
\begin{equation}
p(\theta, t)\le\frac{L(t)}{2}\le\frac{1}{2}(\min_{S^{1}}\widetilde{p}(\theta))^{-1}\mathcal{L}(0).
\end{equation}
So there is a constant $D_{0}>0$ depending only on $X(\theta, 0)$ such that $p(\theta, t)\le D_{0}$ on $S^{1}\times[0,T_{0}]$. Hence
\begin{equation}
0<2\beta\le p(\theta, t)\le D_{0}\quad\text{on}\quad S^{1}\times[0,T_{0}].
\end{equation}
Consider the evolution of the quantity
\begin{equation}
Q(\theta, t)=\frac{\widetilde{p}(\theta,t)\mathcal{K}(\theta,t)}{p(\theta, t)-\beta},\quad(\theta,t)\in S^{1}\times[0,T_{0}].
\end{equation}
The first and the second derivatives of $Q$ with respect to $\theta$ are
$$Q_{\theta}=\frac{(\widetilde{p}\mathcal{K})_{\theta}}{p-\beta}-\frac{p_{\theta}\widetilde{p}\mathcal{K}}{(p-\beta)^{2}}$$
and
$$Q_{\theta\theta}=\frac{(\widetilde{p}\mathcal{K})_{\theta\theta}}{p-\beta}-\frac{2p_{\theta}(\widetilde{p}\mathcal{K})_{\theta}}{(p-\beta)^{2}}
-\frac{p_{\theta\theta}\widetilde{p}\mathcal{K}}{(p-\beta)^{2}}+\frac{2p^{2}_{\theta}\widetilde{p}\mathcal{K}}{(p-\beta)^{3}}.$$
Combining with the evolution \eqref{pt} of the support function $p$, we obtain
\begin{align}\label{Qt}
Q_{t}=&\frac{\widetilde{p}\mathcal{K}^{2}}{\phi(p-\beta)}\big[(\widetilde{p}\mathcal{K})_{\theta\theta}+\widetilde{p}\mathcal{K}
-\frac{\lambda(t)\phi}{2\widetilde{A}}\big]+\frac{\widetilde{p}^{2}\mathcal{K}(\mathcal{K}-\frac{\lambda(t)}{2\widetilde{A}})}{(p-\beta)^{2}}\notag\\
=&\frac{\widetilde{p}\mathcal{K}^{2}}{\phi}\bigg[Q_{\theta\theta}+\frac{2p_{\theta}}{p-\beta}\big(Q_{\theta}+
\frac{p_{\theta}\widetilde{p}\mathcal{K}}{(p-\beta)^{2}}\big)+\frac{p_{\theta\theta}\widetilde{p}\mathcal{K}}{(p-\beta)^{2}}
-\frac{2p^{2}_{\theta}\widetilde{p}\mathcal{K}}{(p-\beta)^{3}}\bigg]\notag\\
&-\frac{\lambda(t)\widetilde{p}\mathcal{K}^{2}}
{2\widetilde{A}(p-\beta)}+\frac{\widetilde{p}^{2}\mathcal{K}^{3}}{\phi(p-\beta)}+
\frac{\widetilde{p}^{2}\mathcal{K}(\mathcal{K}-\frac{\lambda(t)}{2\widetilde{A}})}{(p-\beta)^{2}}\notag\\
=&\frac{\widetilde{p}\mathcal{K}^{2}}{\phi}Q_{\theta\theta}+\frac{2p_{\theta}\widetilde{p}\mathcal{K}^{2}}{\phi(p-\beta)}
Q_{\theta}+\frac{p_{\theta\theta}\widetilde{p}^{2}\mathcal{K}^{3}}{\phi(p-\beta)^{2}}-\frac{\lambda(t)\widetilde{p}\mathcal{K}^{2}}
{2\widetilde{A}(p-\beta)}+\frac{\widetilde{p}^{2}\mathcal{K}^{3}}{\phi(p-\beta)}+
\frac{\widetilde{p}^{2}\mathcal{K}(\mathcal{K}-\frac{\lambda(t)}{2\widetilde{A}})}{(p-\beta)^{2}}\notag\\
=&\frac{\widetilde{p}\mathcal{K}^{2}}{\phi}Q_{\theta\theta}+\frac{2p_{\theta}\widetilde{p}\mathcal{K}^{2}}{\phi(p-\beta)}
Q_{\theta}+\frac{\widetilde{p}^{2}\mathcal{K}^{3}}{\kappa\phi(p-\beta)^{2}}-
\frac{\beta\widetilde{p}^{2}\mathcal{K}^{3}}{\phi(p-\beta)^{2}}-\frac{\lambda(t)\widetilde{p}\mathcal{K}^{2}}
{2\widetilde{A}(p-\beta)}+\frac{\widetilde{p}^{2}\mathcal{K}^{}}{(p-\beta)^{2}}-\frac{\widetilde{p}^{2}\mathcal{K}
\lambda(t)}{2\widetilde{A}(p-\beta)^{2}}\notag\\
\le&\frac{\widetilde{p}\mathcal{K}^{2}}{\phi}Q_{\theta\theta}+\frac{2p_{\theta}\widetilde{p}\mathcal{K}^{2}}{\phi(p-\beta)}
Q_{\theta}+\frac{2\widetilde{p}^{2}\mathcal{K}^{2}}{(p-\beta)^{2}}-\frac{\beta\widetilde{p}^{2}\mathcal{K}^{3}}{\phi(p-\beta)^{2}}\notag\\
\le&\frac{\widetilde{p}\mathcal{K}^{2}}{\phi}Q_{\theta\theta}+\frac{2p_{\theta}\widetilde{p}\mathcal{K}^{2}}{\phi(p-\beta)}
Q_{\theta}+Q^{2}\big(2-\frac{\beta^{2}}{\widetilde{p}\phi}Q\big),
\end{align}
where we have used the familiar identity $p_{\theta\theta}+p=1/\kappa$ and the inequality $p-\beta\ge\beta$ on $S^{1}\times[0,T_{0}]$ in the above derivation. Also note that in the above estimate we have thrown away two terms containing $\lambda(t)$ due to the negative sign in front of them.

Let
 $$Q(\theta_{0},t_{0})=\max_{S^{1}\times[0,T_{0}]}Q(\theta,t)$$
Suppose $t_{0}\ne0$. At the point $(\theta_{0},t_{0})$, one has
$$Q_{t}\ge0,\quad Q_{\theta}=0\quad\text{and}\quad Q_{\theta\theta}\le0.$$
Thus taking in account with \eqref{Qt} we derive to obtain
$$Q(\theta_{0},t_{0})\le\frac{2\widetilde{p}\phi}{\beta^{2}}\le\frac{2M_{4}}{\beta^{2}}.$$
So we have
$$\max_{S^{1}\times[0,T_{0}]}Q\le\max\big\{\frac{2M_{4}}{\beta^{2}},\max_{S^{1}}Q(\theta,0)\big\}:=D_{1}$$
and
$$\max_{S^{1}\times[0,T_{0}]}\mathcal{K}\le D_{1}(p-\beta)(\min_{S^{1}}\widetilde{p})^{-1}\le
D_{1}(D_{0}-\beta)(\min_{S^{1}}\widetilde{p})^{-1}:=D_{2}.$$
The proof is done.
\end{proof}

\begin{remark}(The upper bound of $\lambda(t)$)
By \eqref{ta} and \eqref{kfij}, we have
$$\int_{0}^{L}\widetilde{p}\mathcal{K}^{2}ds\le C_{0}\int_{0}^{L}\widetilde{p}\mathcal{K}ds=2C_{0}\widetilde{A}.$$
\end{remark}
\subsection{Lower bound of the curvature}
In this section, we first establish the gradient estimate and then deduce the time-independent lower bound for the anisotropic curvature via Bernstein type estimates. The idea is from \cite{LT}.

\begin{lemma}\label{graest}(Gradient estimate)
Under the the flow \eqref{flow1}. there exists a time-independent constant $C_{1}$ such that
$$\lvert\mathcal{K}_{\theta}(\theta,t)\lvert\le C_{1},\quad\forall(\theta,t)\in S^{1}\times[0,T).$$
\end{lemma}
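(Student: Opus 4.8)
The plan is to prove the bound by a Bernstein-type maximum principle applied to a weighted square of $\mathcal{K}_\theta$. First I would record the quasilinear form of the curvature equation \eqref{mkt}, namely $\mathcal{K}_t = a\mathcal{K}_{\theta\theta} + b\mathcal{K}_\theta + c$ with $a = \widetilde{p}\mathcal{K}^2/\phi$, $b = 2\widetilde{p}_\theta\mathcal{K}^2/\phi$ and $c = \mathcal{K}^2(\mathcal{K} - \lambda(t)/(2\widetilde{A}))$, and then collect the uniform controls already in hand: the upper bound $0 < \mathcal{K} \le C_0$ from Lemma \ref{kupj}; the bounds $M_1 \le \phi \le M_2$ together with the smoothness and periodicity of $\widetilde{p}$, so that $\widetilde{p}/\phi$, $\widetilde{p}_\theta/\phi$ and their $\theta$-derivatives are all bounded; and the two-sided bounds on $\lambda(t)$ established in the two preceding remarks. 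These make every coefficient, and every $\theta$-derivative of a coefficient, controllable in terms of $\mathcal{K}$ alone.

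Next I would introduce an auxiliary function of the form $\varphi = \mathcal{K}_\theta^2\, g(\mathcal{K})$ (the simplest candidates being $g\equiv 1$, $\varphi = \mathcal{K}_\theta^2 + B\mathcal{K}^2$, or a power weight $g=\mathcal{K}^{s}$), differentiate the equation once in $\theta$, and compute $\varphi_t - a\varphi_{\theta\theta}$. At a point where $\varphi$ attains its spatial maximum at a fixed time I would use $\varphi_\theta = 0$ to pin $\mathcal{K}_{\theta\theta}$ to a multiple of $\mathcal{K}_\theta^2$, and $\varphi_{\theta\theta}\le 0$ to discard the leftover diffusion, reducing everything to an ordinary differential inequality for $W(t):=\max_{S^1}\varphi(\cdot,t)$. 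The aim is to arrange $g$ so that this inequality takes the Riccati form $\tfrac{dW}{dt} \le -c_1 W^2 + c_2 W + c_3$ with $c_1>0$; the negative leading coefficient then forces a time-independent bound on $W$, and if $g$ is bounded below by a positive constant the bound on $\varphi$ converts directly into the desired bound on $|\mathcal{K}_\theta|$ using $\mathcal{K}\le C_0$. Here Young's inequality would be used throughout to absorb the lower-order gradient terms, and the sign of $\lambda(t)$ to keep the reaction contributions favorable.

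The hard part will be the degeneracy of the diffusion. Because $a = \widetilde{p}\mathcal{K}^2/\phi$ vanishes with $\mathcal{K}$, the good negative term $-2a\mathcal{K}_{\theta\theta}^2$ carries two powers of $\mathcal{K}$, whereas two dangerous terms do not match those powers: the cubic term $\sim \mathcal{K}\,\mathcal{K}_\theta^2\,\mathcal{K}_{\theta\theta}$ produced by $a_\theta$, and---this is the genuinely anisotropic difficulty---the odd term $\sim \widetilde{p}_\theta\,\mathcal{K}\,\mathcal{K}_\theta^3$ produced by $b_\theta$, which is absent when $\widetilde{p}\equiv 1$. Tracking the powers of $\mathcal{K}$ at the maximum reveals a sharp tension: to make the surviving diffusion produce a strictly negative coefficient in front of $\mathcal{K}_\theta^4$ one is pushed toward a weight $g$ that \emph{increases} in $\mathcal{K}$ and hence degenerates as $\mathcal{K}\to 0$, while recovering a \emph{pointwise} bound on $|\mathcal{K}_\theta|$ needs $g$ bounded below away from zero. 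Reconciling these---either by a finer choice of $g$, by exploiting the favorable sign of the reaction $c$ where $\mathcal{K}$ is small, or by coupling this estimate with the lower bound of $\mathcal{K}$ proved in the next step---is exactly where I expect the real work to lie; once the Riccati inequality for $W$ is secured, the remaining bookkeeping is routine.
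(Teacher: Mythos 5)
Your proposal is a strategy outline rather than a proof, and the obstruction you identify in your last paragraph is exactly where it fails to close: you acknowledge that with $\varphi=\mathcal{K}_\theta^{2}g(\mathcal{K})$ the good term $-2a\mathcal{K}_{\theta\theta}^{2}$ carries the factor $a=\widetilde{p}\mathcal{K}^{2}/\phi$, which degenerates as $\mathcal{K}\to 0$, while the cubic terms $\sim\mathcal{K}\,\mathcal{K}_\theta^{3}$ do not, and you leave the reconciliation of the weight $g$ open ("this is where I expect the real work to lie"). As written, no choice of $g$ is exhibited and no Riccati inequality is actually derived, so the lemma is not proved. Worse, one of your proposed escape routes --- "coupling this estimate with the lower bound of $\mathcal{K}$ proved in the next step" --- is circular in the logical structure of the paper: Lemma \ref{kloj} obtains the positive lower bound on $\mathcal{K}$ \emph{from} the gradient estimate (and Lemma \ref{conv} also invokes Lemma \ref{graest}), so the gradient estimate must be established using only the upper bound $0<\mathcal{K}\le C_{0}$ of Lemma \ref{kupj}.

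The paper's proof avoids your power-matching problem entirely by using the Lin--Tsai device from \cite{LT}: instead of a quadratic Bernstein quantity it takes the first-order auxiliary functions $F=\pm\mathcal{K}_\theta+\alpha\mathcal{K}$. Differentiating \eqref{mkt} once in $\theta$ (equation \eqref{k1}) and substituting $\mathcal{K}_{\theta\theta}=F_\theta-\alpha\mathcal{K}_\theta$ into the coefficient of $\mathcal{K}_{\theta\theta}$, the piece $\frac{2\widetilde{p}\mathcal{K}\mathcal{K}_\theta}{\phi}\mathcal{K}_{\theta\theta}$ produces the favorable quadratic term $-\frac{2\alpha\widetilde{p}\mathcal{K}}{\phi}\mathcal{K}_\theta^{2}$, which for $\alpha$ large dominates the only other quadratic contribution $\frac{4\widetilde{p}_\theta\mathcal{K}}{\phi}\mathcal{K}_\theta^{2}$; since the equation for $\mathcal{K}_\theta$ is never multiplied by $\mathcal{K}_\theta$, no cubic term $\widetilde{p}_\theta\mathcal{K}\mathcal{K}_\theta^{3}$ and no $\mathcal{K}_{\theta\theta}^{2}$ term ever appear, and every term of the reaction $G$ carries at least one common factor of $\mathcal{K}>0$, so the sign of $G$ for large $\mathcal{K}_\theta$ is decided without any lower bound on $\mathcal{K}$. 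If you want to salvage your route, the fix is to abandon the weight $g(\mathcal{K})$ and adopt this linear-in-$\mathcal{K}_\theta$ quantity (run once with $+\mathcal{K}_\theta$ and once with $-\mathcal{K}_\theta$ to bound the gradient from both sides); otherwise the degeneracy you correctly diagnosed remains an unfilled gap.
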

\begin{proof}
Let $F=\mathcal{K}_{\theta}+\alpha\mathcal{K}$ with any fixed $\alpha>0$. We first compute
\begin{align}\label{k1}
(\mathcal{K}_{\theta})_{t}=&\bigg\{\frac{\mathcal{K}^{2}}{\phi}\big[\widetilde{p}\mathcal{K}_{\theta\theta}+2\widetilde{p}_{\theta}\mathcal{K}_{\theta}+
    (\widetilde{p}_{\theta\theta}+\widetilde{p})(\mathcal{K}-\frac{\lambda(t)}{2\widetilde{A}})\big]\bigg\}_{\theta}\notag\\
=&\big(\frac{\mathcal{K}^{2}}{\phi}\big)_{\theta}\big(\widetilde{p}\mathcal{K}_{\theta\theta}+2\widetilde{p}_{\theta}\mathcal{K}_{\theta}\big)+
\big(\frac{2\mathcal{K}\mathcal{K}_{\theta}}{\phi}-\frac{\phi_{\theta}\mathcal{K}^{2}}{\phi}\big)\phi
\big(\mathcal{K}-\frac{\lambda(t)}{2\widetilde{A}}\big)\notag\\
&+\frac{\mathcal{K}^{2}}{\phi}\bigg[\widetilde{p}_{\theta}\mathcal{K}_{\theta\theta}+
+\widetilde{p}\mathcal{K}_{\theta\theta\theta}+2\widetilde{p}_{\theta\theta}\mathcal{K}_{\theta}+2\widetilde{p}_{\theta}\mathcal{K}_{\theta\theta}
+(\widetilde{p}_{\theta\theta\theta}+\widetilde{p}_{\theta})\big(\mathcal{K}-\frac{\lambda(t)}{2\widetilde{A}}\big)+
(\widetilde{p}_{\theta\theta}+\widetilde{p})\mathcal{K}_{\theta}\bigg]\notag\notag\\
=&\frac{\widetilde{p}\mathcal{K}^{2}}{\phi}\mathcal{K}_{\theta\theta\theta}+\bigg(\frac{3\mathcal{K}^{2}\widetilde{p}_{\theta}}{\phi}+
\widetilde{p}\big(\frac{\mathcal{K}^{2}}{\phi}\big)_{\theta}\bigg)\mathcal{K}_{\theta\theta}+\bigg[\frac{\mathcal{K}^{2}}{\phi}
(\widetilde{p}+3\widetilde{p}_{\theta\theta})+2\widetilde{p}_{\theta}\big(\frac{\mathcal{K}^{2}}{\phi}\big)_{\theta}
+2\mathcal{K}(\mathcal{K}-\frac{\lambda(t)}{2\widetilde{A}})\bigg]\mathcal{K}_{\theta}\notag\\
&+\bigg[\frac{\mathcal{K}^{2}}{\phi}(\widetilde{p}_{\theta\theta\theta}+\widetilde{p}_{\theta})-\frac{\phi_{\theta}
\mathcal{K}^{2}}{\phi}\bigg]\big(\mathcal{K}-\frac{\lambda(t)}{2\widetilde{A}}\big),
\end{align}
and by $F_{\theta}=\mathcal{K}_{\theta\theta}+\alpha\mathcal{K}_{\theta},\,F_{\theta\theta}=\mathcal{K}_{\theta\theta\theta}+\alpha\mathcal{K}_{\theta\theta}$, we get
$$F_{t}=\frac{\widetilde{p}\mathcal{K}^{2}}{\phi}F_{\theta\theta}+\bigg[\frac{3\mathcal{K}^{2}\widetilde{p}_{\theta}}{\phi}+
\widetilde{p}\big(\frac{\mathcal{K}^{2}}{\phi}\big)_{\theta}\bigg]F_{\theta}+G(\theta,t),$$
where
\begin{align*}
G(\theta,t)&=\big(\frac{4\widetilde{p}_{\theta}\mathcal{K}}{\phi}-\frac{2\alpha\widetilde{p}\mathcal{K}}{\phi}\big)\mathcal{K}_{\theta}^{2}
+\bigg[\frac{\mathcal{K}^{2}}{\phi}(\widetilde{p}_{\theta\theta\theta}+\widetilde{p}_{\theta})-\frac{\phi_{\theta}
\mathcal{K}^{2}}{\phi}+\alpha\mathcal{K}^{2}\bigg]\big(\mathcal{K}-\frac{\lambda(t)}{2\widetilde{A}}\big)\\
+&\bigg[\frac{\alpha\widetilde{p}\phi_{\theta}\mathcal{K}^{2}}{\phi^{2}}-\frac{\alpha\widetilde{p}_{\theta}\mathcal{K}^{2}}{\phi}
-\frac{2\widetilde{p}_{\theta}\phi_{\theta}\mathcal{K}^{2}}{\phi^{2}}+(3\widetilde{p}_{\theta\theta}+\widetilde{p})
\frac{\mathcal{K}^{2}}{\phi}+2\mathcal{K}(\mathcal{K}-\frac{\lambda(t)}{2\widetilde{A}})\bigg]\mathcal{K}_{\theta}
\end{align*}
Choose $\alpha$ sufficiently large such that the coefficient of $\mathcal{K}_{\theta}^{2}$ satisfies $\frac{4\widetilde{p}_{\theta}\mathcal{K}}{\phi}-\frac{2\alpha\widetilde{p}\mathcal{K}}{\phi}\le -D_{0}$ for some time-independent constant $D_{0}>0$. Without loss of generality, we can assume that $\mathcal{K}_{\theta}>0$. Since $\mathcal{K}$ and $\lambda(t)$ are bounded, we can find time-independent constants $D_{1}$ and $D_{2}$ such that
$$G(\theta,t)\le-D_{0}\mathcal{K}_{\theta}^{2}+D_{1}\mathcal{K}_{\theta}+D_{2},\quad\forall(\theta,t)\in S^{1}\times[0,T).$$
Thus, $G<0$ on the domain where $\mathcal{K}_{\theta}$ is large enough, or equivalently $F$ is large enough. Using maximum principle, one can obtain the upper bound for $F$ and thus for $\mathcal{K}_{\theta}$. Similarly, the upper bound for $-\mathcal{K}_{\theta}$ can be derived by setting $F=-\mathcal{K}_{\theta}+\alpha\mathcal{K}$ with any fixed $\alpha>0$. Thus, $\mathcal{K}_{\theta}$ is uniformly bounded on $S^{1}\times[0,T)$.
\end{proof}

\begin{lemma}\label{kloj}
Under the flow \eqref{flow1}, there exists a time-independent constant $C_{2}$ such that
$$\mathcal{K}(\theta,t)\ge C_{2},\quad\forall(\theta,t)\in S^{1}\times[0,T).$$
\end{lemma}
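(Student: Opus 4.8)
The plan is to turn the uniform gradient bound of Lemma \ref{graest} into a pointwise lower bound on $\mathcal{K}$ by pairing it with the integral constraint coming from the preservation of the anisotropic length. I would first point out why the obvious maximum-principle route is inconclusive: at a spatial minimum of $\mathcal{K}$ the evolution equation \eqref{mkt} gives only
\[
\frac{d}{dt}\Big(\min_{S^1}\mathcal{K}\Big)\ge \big(\min_{S^1}\mathcal{K}\big)^2\Big(\min_{S^1}\mathcal{K}-\frac{\lambda(t)}{2\widetilde{A}}\Big),
\]
and since $\lambda(t)$ is bounded away from zero, the right-hand side turns negative precisely when $\mathcal{K}$ is small; this differential inequality therefore does not prevent the minimum from decaying in time. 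I would abandon it and argue through an integral bound instead, which is where the conserved anisotropic length does the essential work.

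The first step is to record the integral constraint. Because $\kappa=\mathcal{K}/\phi$, the preserved anisotropic length \eqref{anl} reads
\[
\mathcal{L}(0)=\mathcal{L}(t)=\int_0^{2\pi}\frac{\widetilde{p}}{\kappa}\,d\theta=\int_0^{2\pi}\frac{\widetilde{p}\,\phi}{\mathcal{K}}\,d\theta
\ge \big(\min_{S^1}\widetilde{p}\big)\,M_1\int_0^{2\pi}\frac{d\theta}{\mathcal{K}},
\]
using $\phi\ge M_1>0$ and $\widetilde{p}\ge\min_{S^1}\widetilde{p}>0$. Hence $\int_0^{2\pi}\mathcal{K}^{-1}\,d\theta\le C:=\mathcal{L}(0)\big((\min_{S^1}\widetilde{p})M_1\big)^{-1}$, and this bound is independent of $t$.

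The second step is a logarithmic integral estimate. Fix $t$, let $m=m(t)=\mathcal{K}(\theta_0,t)=\min_{S^1}\mathcal{K}(\cdot,t)$, and use the gradient estimate $|\mathcal{K}_\theta|\le C_1$ to write $\mathcal{K}(\theta,t)\le m+C_1|\theta-\theta_0|$ for $|\theta-\theta_0|\le\pi$. Integrating over an interval of length $2\pi$ centered at $\theta_0$ (which by $2\pi$-periodicity equals the full-circle integral) gives
\[
C\ge\int_{\theta_0-\pi}^{\theta_0+\pi}\frac{d\theta}{\mathcal{K}}\ge\int_{\theta_0-\pi}^{\theta_0+\pi}\frac{d\theta}{m+C_1|\theta-\theta_0|}
=\frac{2}{C_1}\ln\!\Big(1+\frac{C_1\pi}{m}\Big).
\]
Since the right-hand side tends to $+\infty$ as $m\to0^+$, this inequality forces $m$ to remain above a fixed positive threshold; solving for $m$ yields $\mathcal{K}(\theta_0,t)=m\ge C_1\pi\big(e^{C_1C/2}-1\big)^{-1}=:C_2>0$, which is the claimed bound.

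The substantive idea is the replacement of the failed pointwise differential inequality by the integral estimate above, in which the gradient bound prevents $1/\mathcal{K}$ from concentrating too sharply near its spike while the conserved length caps its total mass. The one point I would emphasize for rigor is that every constant entering the final estimate — the gradient constant $C_1$ from Lemma \ref{graest} and the length bound $C$ — is genuinely independent of $t$, so that $C_2$ depends only on $C_1$, $\mathcal{L}(0)$, $M_1$ and $\min_{S^1}\widetilde{p}$. This time-independence is exactly what is required to feed the estimate into the continuation argument for global existence in the next step.
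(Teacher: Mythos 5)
Your proposal is correct and rests on exactly the same two ingredients as the paper's proof: the time-independent gradient bound of Lemma \ref{graest} and the conservation of the anisotropic length, which controls $\int_0^{2\pi}\mathcal{K}^{-1}\,d\theta$. The only difference is in the final elementary step — the paper bounds the oscillation of $\log\mathcal{K}$ to obtain $\mathcal{K}_{\max}/\mathcal{K}_{\min}\le e^{D_0}$ and then bounds $\mathcal{K}_{\max}$ from below by the same integral, whereas you compare $\mathcal{K}$ linearly to its minimum and integrate $1/\mathcal{K}$ directly; both routes are valid and produce a constant $C_2$ depending only on $C_1$, $\mathcal{L}(0)$, $\widetilde{p}$ and $\phi$.
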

\begin{proof}
For all $t\in[0,T)$ and any $\theta_{1}, \theta_{2}\in S^{1}$, we have
\begin{align*}
\log\mathcal{K}_{\max}(t)-\log\mathcal{K}_{\min}(t)&=\int^{\theta_{2}}_{\theta_{1}}\frac{\mathcal{K}_{\theta}(\theta,t)}{\mathcal{K}(\theta,t)}d\theta\\
&\le\int^{2\pi}_{0}\frac{\big\lvert\mathcal{K}_{\theta}(\theta,t)\big\lvert}{\mathcal{K}(\theta,t)}d\theta\\
&\le C_{1}\int^{2\pi}_{0}\frac{1}{\mathcal{K}(\theta,t)}d\theta\\
&\le\frac{C_{1}}{\min_{S^{1}}(\widetilde{p}\phi)}
\int^{2\pi}_{0}\frac{\widetilde{p}}{\kappa}d\theta\\
&=\frac{C_{1}\mathcal{L}(t)}{\min_{S^{1}}(\widetilde{p}\phi)}=\frac{C_{1}\mathcal{L}(0)}{\min_{S^{1}}(\widetilde{p}\phi)}:=D_{0}.
\end{align*}
Then we have
$$\frac{\mathcal{K}_{\max}(t)}{\mathcal{K}_{\min}(t)}\le e^{D_{0}}.$$
Also by
$$\frac{2\pi\min_{S^{1}}(\widetilde{p}\phi)}{\mathcal{K}_{\max}}\le\int^{2\pi}_{0}\frac{\widetilde{p}\phi}{\mathcal{K}}d\theta
=\int^{2\pi}_{0}\frac{\widetilde{p}}{\kappa}d\theta=\mathcal{L}(t)=\mathcal{L}(0),$$
we have
$$\mathcal{K}_{\min}(t)\ge\mathcal{K}_{\max}(t)e^{-D_{0}}\ge\frac{2\pi\min_{S^{1}}(\widetilde{p}\phi)}{\mathcal{L}(0)}e^{-D_{0}}.$$
This finishes the proof.
\end{proof}

\subsection{Higher order estimates on curvature}
From Lemmas \ref{kupj} and \ref{kloj}, $\mathcal{K}$ has time-independent positive upper and lower bound.   Parabolic regularity theory can guarantee that all space-time derivatives of $\mathcal{K}$ remain bounded on $S^{1}\times[0,T)$. We now show this detail by the maximum principle argument.
\begin{lemma}\label{hies}
Under the flow \eqref{flow1}, there exists a constant $C_{3}$ independent of time such that
$$\lvert\mathcal{K}_{\theta\theta}(\theta,t)\lvert\le C_{3},\quad\forall(\theta,t)\in S^{1}\times[0,T).$$
\end{lemma}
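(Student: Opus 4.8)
The plan is to differentiate the curvature equation \eqref{mkt} twice in $\theta$ and run a maximum principle on a Bernstein-type quantity, using the uniform bounds $0<C_2\le\mathcal{K}\le C_0$ (Lemmas \ref{kupj}, \ref{kloj}) and $|\mathcal{K}_\theta|\le C_1$ (Lemma \ref{graest}). Writing $w=\mathcal{K}_{\theta\theta}$ and abbreviating the diffusion coefficient $P=\widetilde{p}\mathcal{K}^2/\phi$, a direct computation from \eqref{mkt} yields an equation of the schematic form
\begin{equation*}
w_t=P\,w_{\theta\theta}+b_1(\theta,t)\,w_\theta+\frac{2\widetilde{p}\mathcal{K}}{\phi}\,w^2+b_2(\theta,t)\,w+b_3(\theta,t),
\end{equation*}
where, thanks to the $C^0$ and $C^1$ bounds, the coefficients $b_1,b_2,b_3$ are bounded and $P$ is bounded above and below by positive constants (uniform parabolicity). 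The only term obstructing a naive maximum principle is the positive quadratic $\tfrac{2\widetilde{p}\mathcal{K}}{\phi}w^2$, which arises because the diffusion coefficient $P$ depends on $\mathcal{K}$; no combination of $w$ with lower-order quantities can cancel it, so the crux is to produce a competing negative quadratic.

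To this end I would introduce the auxiliary function $G=\mathcal{K}_{\theta\theta}+\alpha\,\mathcal{K}_\theta^2$ with $\alpha>0$ to be fixed. The key is the Bernstein identity: since $\mathcal{K}_\theta$ solves a parabolic equation with the same principal coefficient $P$, letting the diffusion act on the square gives
\begin{equation*}
(\mathcal{K}_\theta^2)_t=P\,(\mathcal{K}_\theta^2)_{\theta\theta}-2P\,\mathcal{K}_{\theta\theta}^2+(\text{terms linear in }\mathcal{K}_{\theta\theta}\text{ and bounded terms}),
\end{equation*}
the crucial gain being $-2P\,\mathcal{K}_{\theta\theta}^2=-\tfrac{2\widetilde{p}\mathcal{K}^2}{\phi}w^2$. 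Rewriting the transport terms through $G_\theta=w_\theta+2\alpha\mathcal{K}_\theta w$, one finds that $G$ satisfies $G_t=P\,G_{\theta\theta}+(\cdots)G_\theta+\tfrac{2\widetilde{p}\mathcal{K}}{\phi}\big(1-\alpha\mathcal{K}\big)w^2+(\text{linear in }w)+(\text{bounded})$. Choosing $\alpha>1/C_2$ makes $1-\alpha\mathcal{K}\le 1-\alpha C_2<0$, so the quadratic coefficient is $\le-\delta<0$ uniformly. Applying the maximum principle to $G$ on $S^1\times[0,T)$: at an interior maximum with $t_0>0$ one has $G_\theta=0$, $G_{\theta\theta}\le0$, $G_t\ge0$, which forces $-\delta w^2+(\text{bounded})\,w+(\text{bounded})\ge0$ and hence bounds $w$ there; together with $|\mathcal{K}_\theta|\le C_1$ this bounds $G$, while the case $t_0=0$ is controlled by the smooth initial datum. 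This yields a time-independent upper bound $\mathcal{K}_{\theta\theta}\le C_3'$.

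The lower bound follows in the same way from $\widetilde{G}=-\mathcal{K}_{\theta\theta}+\alpha\,\mathcal{K}_\theta^2$, whose quadratic coefficient is $-\tfrac{2\widetilde{p}\mathcal{K}}{\phi}\big(1+\alpha\mathcal{K}\big)\le-\delta<0$ for every $\alpha\ge0$ (here the $\mathcal{K}$-dependence of $P$ actually helps), giving $\mathcal{K}_{\theta\theta}\ge-C_3''$; setting $C_3=\max\{C_3',C_3''\}$ finishes the proof. The main obstacle throughout is precisely the positive quadratic $\tfrac{2\widetilde{p}\mathcal{K}}{\phi}\mathcal{K}_{\theta\theta}^2$ forced by the quasilinear, fast-diffusion-type structure of \eqref{mkt}; once it is absorbed by the negative Bernstein term $-2P\,\mathcal{K}_{\theta\theta}^2$ and the uniform lower bound on $\mathcal{K}$, everything else is bounded by the previously established $C^0$ and $C^1$ estimates. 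I expect the only delicate bookkeeping to be checking that no second, unexpected $w^2$ term is hidden in the twice-differentiated reaction part, which a careful expansion shows contributes only terms linear in $\mathcal{K}_{\theta\theta}$.
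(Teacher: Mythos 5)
Your proposal is correct and follows essentially the same route as the paper: the paper also sets $F=\mathcal{K}_{\theta\theta}+\alpha\mathcal{K}_{\theta}^{2}$, finds the quadratic coefficient $A_{1}=\frac{2\widetilde{p}\mathcal{K}}{\phi}(1-\alpha\mathcal{K})$, makes it negative by taking $\alpha$ large (using the lower bound on $\mathcal{K}$), and concludes by the maximum principle. Your write-up is if anything more explicit about where the Bernstein gain $-2P\mathcal{K}_{\theta\theta}^{2}$ comes from and about the threshold $\alpha>1/C_{2}$, and your separate treatment of the lower bound via $-\mathcal{K}_{\theta\theta}+\alpha\mathcal{K}_{\theta}^{2}$ mirrors what the paper does implicitly by invoking the two-sided argument of its gradient estimate.
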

\begin{proof}
Let $F=\mathcal{K}_{\theta\theta}+\alpha\mathcal{K}_{\theta}^{2}$, where $\alpha$ ia s constant to be chosen later on. We rewrite the equation \eqref{k1} as
\begin{equation}\label{k2}
(\mathcal{K}_{\theta})_{t}=\frac{\widetilde{p}\mathcal{K}^{2}}{\phi}\mathcal{K}_{\theta\theta\theta}+\bigg[\big(
\frac{\widetilde{p}\mathcal{K}^{2}}{\phi}\big)_{\theta}+\frac{2\widetilde{p}_{\theta}\mathcal{K}^{2}}{\phi}\bigg]\mathcal{K}_{\theta\theta}
+\bigg[2\big(\frac{\widetilde{p}_{\theta}\mathcal{K}^{2}}{\phi}\big)_{\theta}+3\mathcal{K}^{2}-\frac{\lambda(t)\mathcal{K}
}{\widetilde{A}}\bigg]\mathcal{K}_{\theta}.
\end{equation}
Furthermore, we have
\begin{align*}
(\mathcal{K}_{\theta\theta})_{t}=&\frac{\widetilde{p}\mathcal{K}^{2}}{\phi}\mathcal{K}_{\theta\theta\theta\theta}
+\bigg[2\big(\frac{\widetilde{p}\mathcal{K}^{2}}{\phi}\big)_{\theta}+\frac{2\widetilde{p}_{\theta}\mathcal{K}^{2}}{\phi}\bigg]
\mathcal{K}_{\theta\theta\theta}+\bigg[\big(\frac{\widetilde{p}\mathcal{K}^{2}}{\phi}\big)_{\theta\theta}+4\big(
\frac{\widetilde{p}_{\theta}\mathcal{K}^{2}}{\phi}\big)_{\theta}+3\mathcal{K}^{2}-\frac{\lambda(t)\mathcal{K}}{\widetilde{A}}\bigg]\mathcal{K}_{\theta\theta}\\
&+\bigg[2\big(\frac{\widetilde{p}_{\theta}\mathcal{K}^{2}}{\phi}\big)_{\theta\theta}+6\mathcal{K}\mathcal{K}_{\theta}
-\frac{\lambda(t)\mathcal{K}_{\theta}}{\widetilde{A}}\bigg]\mathcal{K}_{\theta}.
\end{align*}
Then we have
\begin{align*}
F_{t}=&\frac{\widetilde{p}\mathcal{K}^{2}}{\phi}\mathcal{K}_{\theta\theta\theta\theta}+\bigg[2\big(\frac{\widetilde{p}\mathcal{K}^{2}}{\phi}\big)_{\theta}+\frac{2\widetilde{p}_{\theta}\mathcal{K}^{2}}{\phi}
+\frac{2\alpha\widetilde{p}\mathcal{K}_{\theta}\mathcal{K}^{2}}{\phi}\bigg]\mathcal{K}_{\theta\theta\theta}\\
&+\bigg[\big(\frac{\widetilde{p}\mathcal{K}^{2}}{\phi}\big)_{\theta\theta}+4\big(
\frac{\widetilde{p}_{\theta}\mathcal{K}^{2}}{\phi}\big)_{\theta}+
2\alpha\mathcal{K}_{\theta}\big(\frac{\widetilde{p}\mathcal{K}^{2}}{\phi}\big)_{\theta}+\frac{4\alpha\widetilde{p}_{\theta}\mathcal{K}_{\theta}\mathcal{K}^{2}}{\phi}
+3\mathcal{K}^{2}-\frac{\lambda(t)\mathcal{K}}{\widetilde{A}}\bigg]\mathcal{K}_{\theta\theta}\\
&+\bigg[4\alpha\big(\frac{\widetilde{p}_{\theta}\mathcal{K}^{2}}{\phi}\big)_{\theta}+6\alpha\mathcal{K}^{2}+6\mathcal{K}-\frac{2\alpha\lambda(t)\mathcal{K}}{\widetilde{A}}-\frac{\lambda(t)}{\widetilde{A}}\bigg]
\mathcal{K}_{\theta}^{2}+2\big(\frac{\widetilde{p}_{\theta}\mathcal{K}^{2}}{\phi}\big)_{\theta\theta}\mathcal{K}_{\theta}\\
=&\frac{\widetilde{p}\mathcal{K}^{2}}{\phi}F_{\theta\theta}+\bigg[2\big(\frac{\widetilde{p}\mathcal{K}^{2}}{\phi}\big)_{\theta}+\frac{2\widetilde{p}_{\theta}\mathcal{K}^{2}}{\phi}\bigg]F_{\theta}
+G(\theta,t),
\end{align*}
where
\begin{align*}
G(\theta,t)=&\frac{-2\alpha\widetilde{p}\mathcal{K}^{2}\mathcal{K}_{\theta\theta}^{2}}{\phi}+\bigg[\big(\frac{\widetilde{p}\mathcal{K}^{2}}{\phi}\big)_{\theta\theta}+4\big(
\frac{\widetilde{p}_{\theta}\mathcal{K}^{2}}{\phi}\big)_{\theta}-
2\alpha\mathcal{K}_{\theta}\big(\frac{\widetilde{p}\mathcal{K}^{2}}{\phi}\big)_{\theta}
+3\mathcal{K}^{2}-\frac{\lambda(t)\mathcal{K}}{\widetilde{A}}\bigg]\mathcal{K}_{\theta\theta}\\
&+\bigg[4\alpha\big(\frac{\widetilde{p}_{\theta}\mathcal{K}^{2}}{\phi}\big)_{\theta}+6\alpha\mathcal{K}^{2}+6\mathcal{K}-\frac{2\alpha\lambda(t)\mathcal{K}}{\widetilde{A}}-\frac{\lambda(t)}{\widetilde{A}}\bigg]
\mathcal{K}_{\theta}^{2}+2\big(\frac{\widetilde{p}_{\theta}\mathcal{K}^{2}}{\phi}\big)_{\theta\theta}\mathcal{K}_{\theta}\\
=&A_{1}(\theta,t)\mathcal{K}_{\theta\theta}^{2}+A_{2}(\theta,t)\mathcal{K}_{\theta\theta}+A_{3}(\theta,t).
\end{align*}
Here,
$$A_{1}(\theta,t)=\frac{-2\alpha\widetilde{p}\mathcal{K}^{2}}{\phi}+\frac{2\widetilde{p}\mathcal{K}}{\phi},$$
$$A_{2}(\theta,t)=\big(\frac{\widetilde{p}\mathcal{K}^{2}}{\phi}\big)_{\theta\theta}+4\big(
\frac{\widetilde{p}_{\theta}\mathcal{K}^{2}}{\phi}\big)_{\theta}-
2\alpha\mathcal{K}_{\theta}\big(\frac{\widetilde{p}\mathcal{K}^{2}}{\phi}\big)_{\theta}
+3\mathcal{K}^{2}-\frac{\lambda(t)\mathcal{K}}{\widetilde{A}}+\frac{4\widetilde{p}_{\theta}\mathcal{K}_{\theta}\mathcal{K}}{\phi},$$
and
$$A_{3}(\theta,t)=\bigg[4\alpha\big(\frac{\widetilde{p}_{\theta}\mathcal{K}^{2}}{\phi}\big)_{\theta}+6\alpha\mathcal{K}^{2}+6\mathcal{K}-\frac{2\alpha\lambda(t)\mathcal{K}}{\widetilde{A}}-\frac{\lambda(t)}{\widetilde{A}}\bigg]
\mathcal{K}_{\theta}^{2}+2\big(\frac{\widetilde{p}_{\theta}\mathcal{K}^{2}}{\phi}\big)_{\theta\theta}\mathcal{K}_{\theta}-
\frac{4\widetilde{p}_{\theta}\mathcal{K}_{\theta}\mathcal{K}_{\theta\theta}\mathcal{K}}{\phi}.$$
Notice that $\mathcal{K},\mathcal{K}_{\theta}$ and $\lambda(t)$ are all bounded quantities. Choose $\alpha$ sufficiently large such that $A_{1}(\theta,t)$, the coefficient of $\mathcal{K}_{\theta\theta}^{2}$, is negative. Since the coefficients $A_{1}(\theta,t), A_{2}(\theta,t)$ and $A_{3}(\theta,t)$ are uniformly bounded, we use the maximum principle argument as in Lemma \ref{graest} to conclude that $\mathcal{K}_{\theta\theta}$ is uniformly bounded.
\end{proof}
From now on, denoted by $\mathcal{K}_{i}=\frac{\partial^{i}\mathcal{K}}{\partial\theta^{i}}(i=1,2,3\cdots)$. By the induction method, we can get the following conclusion.

\begin{lemma}(Higher order derivatives estimate of $\mathcal{K}$)
For any positive integer $s>2$, if $\mathcal{K},\, \mathcal{K}_{1},\, \mathcal{K}_{2},\cdots, \mathcal{K}_{s}$ are bounded on $S^{1}\times[0,T)$, then $\mathcal{K}_{s+1}$ is uniformly bounded on $S^{1}\times[0,T)$.
\end{lemma}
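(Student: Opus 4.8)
The plan is to establish this single inductive step in the spirit of Lemmas \ref{graest} and \ref{hies}, whose conclusions for $\mathcal{K}_{1}$ and $\mathcal{K}_{2}$ furnish the base cases. Granting that $\mathcal{K},\mathcal{K}_{1},\ldots,\mathcal{K}_{s}$ are uniformly bounded on $S^{1}\times[0,T)$ and recalling the uniform lower bound $\mathcal{K}\ge C_{2}>0$ from Lemma \ref{kloj}, I would first differentiate the curvature equation \eqref{mkt} a total of $s+1$ times in $\theta$. Because \eqref{mkt} is second order and quasilinear, with diffusion coefficient $a:=\widetilde{p}\mathcal{K}^{2}/\phi$ and with lower-order coefficients that are smooth in $\theta$ and polynomial in $\mathcal{K}$, the resulting equation takes the form
\[
(\mathcal{K}_{s+1})_{t}=a\,\mathcal{K}_{s+3}+b(\theta,t)\,\mathcal{K}_{s+2}+R(\theta,t),
\]
where $b$ gathers the single $\theta$-derivative of $a$ together with the differentiated first-order coefficients, and $R$ is polynomial in $\mathcal{K},\mathcal{K}_{1},\ldots,\mathcal{K}_{s+1}$ and at most linear in the top derivative $\mathcal{K}_{s+1}$.

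The structural point I would verify by a Leibniz/degree-counting argument, rather than by a full expansion, is that no genuinely quadratic term $\mathcal{K}_{s+1}^{2}$ is produced by the PDE itself. Indeed, in $\partial_{\theta}^{\,s+1}(a\mathcal{K}_{2})=\sum_{j}\binom{s+1}{j}(\partial_{\theta}^{\,j}a)\,\mathcal{K}_{s+3-j}$ a factor $\mathcal{K}_{s+1}$ can appear in $\partial_{\theta}^{\,j}a$ only for $j\ge s+1$, whereas pairing it with a second $\mathcal{K}_{s+1}$ would require $s+3-j=s+1$, i.e. $j=2$; these are incompatible for $s\ge 2$. Hence the top factor of $\partial_{\theta}^{\,s+1}a$ is always multiplied by $\mathcal{K}_{2}$ (bounded), and the induction hypothesis makes $b$ and every coefficient of $R$ uniformly bounded.

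With this in hand, I would introduce $F=\mathcal{K}_{s+1}+\alpha\mathcal{K}_{s}^{2}$ for a large constant $\alpha>0$, mirroring $F=\mathcal{K}_{\theta\theta}+\alpha\mathcal{K}_{\theta}^{2}$ from Lemma \ref{hies}. From $F_{\theta}=\mathcal{K}_{s+2}+2\alpha\mathcal{K}_{s}\mathcal{K}_{s+1}$ and $F_{\theta\theta}=\mathcal{K}_{s+3}+2\alpha\mathcal{K}_{s}\mathcal{K}_{s+2}+2\alpha\mathcal{K}_{s+1}^{2}$, a direct computation recasts the evolution of $F$ as
\[
F_{t}=a\,F_{\theta\theta}+b\,F_{\theta}+G(\theta,t),
\]
in which every occurrence of $\mathcal{K}_{s+3}$ and $\mathcal{K}_{s+2}$ is absorbed into $aF_{\theta\theta}$ and $bF_{\theta}$, leaving
\[
G=-2\alpha\,a\,\mathcal{K}_{s+1}^{2}+(\text{terms at most linear in }\mathcal{K}_{s+1}\text{ with bounded coefficients}).
\]
Since $a=\widetilde{p}\mathcal{K}^{2}/\phi\ge c_{0}>0$ uniformly by Lemma \ref{kloj}, the term $-2\alpha a\mathcal{K}_{s+1}^{2}$ dominates, so that $G\le -2\alpha c_{0}\mathcal{K}_{s+1}^{2}+C_{1}|\mathcal{K}_{s+1}|+C_{2}$.

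Finally I would close the estimate by the maximum principle, exactly as in Lemma \ref{hies}. As $\mathcal{K}_{s}$ is already bounded, $F$ differs from $\mathcal{K}_{s+1}$ by a bounded amount, so a large value of $F$ forces $|\mathcal{K}_{s+1}|$ to be large and hence $G<0$ there. At a space-time maximum $(\theta_{0},t_{0})$ of $F$ with $t_{0}>0$ the conditions $F_{t}\ge0$, $F_{\theta}=0$, $F_{\theta\theta}\le0$ force $G(\theta_{0},t_{0})\ge0$, which is impossible once $F(\theta_{0},t_{0})$ exceeds a constant depending only on the data; this bounds $\mathcal{K}_{s+1}$ from above. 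The symmetric choice $F=-\mathcal{K}_{s+1}+\alpha\mathcal{K}_{s}^{2}$ bounds it from below, completing the inductive step. I expect the only delicate part to be the bookkeeping of the first two paragraphs — confirming that no positive $\mathcal{K}_{s+1}^{2}$ term survives from the PDE and that all coefficients of $R$ are controlled by the induction hypothesis — after which the maximum-principle argument is routine.
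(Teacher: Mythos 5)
Your proposal is correct and follows essentially the same route as the paper: differentiate the curvature equation, verify that the only quadratic top-order term comes from the auxiliary function $F=\mathcal{K}_{s+1}+\alpha\mathcal{K}_{s}^{2}$ itself (so the coefficient of $\mathcal{K}_{s+1}^{2}$ in $G$ is $-2\alpha\widetilde{p}\mathcal{K}^{2}/\phi<0$), and close with the maximum principle as in Lemma \ref{hies}. Your Leibniz degree-counting argument is a cleaner justification of the structure that the paper establishes by explicitly organizing the differentiated equation, but the substance is identical.
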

\begin{proof}
For convenience, we rewrite the equation \eqref{k2} as
\begin{equation}
(\mathcal{K}_{1})_{t}=\frac{\widetilde{p}\mathcal{K}^{2}}{\phi}\mathcal{K}_{3}+\bigg[\big(
\frac{\widetilde{p}\mathcal{K}^{2}}{\phi}\big)_{\theta}+\frac{2\widetilde{p}_{\theta}\mathcal{K}^{2}}{\phi}\bigg]\mathcal{K}_{2}
+\bigg[2\big(\frac{\widetilde{p}_{\theta}\mathcal{K}^{2}}{\phi}\big)_{\theta}+f(\mathcal{K})\bigg]\mathcal{K}_{1}.
\end{equation}
where $f(\mathcal{K})=3\mathcal{K}^{2}-\frac{\lambda(t)\mathcal{K}}{\widetilde{A}}$. We employ the induction method to obtain the equation satisfied by $\mathcal{K}_{s}$,
\begin{align}
(\mathcal{K}_{s})_{t}=&\frac{\widetilde{p}\mathcal{K}^{2}}{\phi}\mathcal{K}_{s+2}+\bigg[s\big(\frac{\widetilde{p}\mathcal{K}^{2}}{\phi}\big)_{\theta}+
\frac{2\widetilde{p}_{\theta}\mathcal{K}^{2}}{\phi}\bigg]\mathcal{K}_{s+1}+\bigg[C^{1}_{s}\big(\frac{\widetilde{p}\mathcal{K}^{2}}{\phi}\big)_{\theta\theta}
+C^{2}_{s}\big(\frac{\widetilde{p}_{\theta}\mathcal{K}^{2}}{\phi}\big)_{\theta}+f(\mathcal{K})\bigg]\mathcal{K}_{s}\notag\\
&+\bigg[C^{1}_{s-1}\big(\frac{\widetilde{p}\mathcal{K}^{2}}{\phi}\big)_{\theta\theta\theta}
+C^{2}_{s-1}\big(\frac{\widetilde{p}_{\theta}\mathcal{K}^{2}}{\phi}\big)_{\theta\theta}+f(\mathcal{K},\mathcal{K}_{1})\bigg]\mathcal{K}_{s-1}+\cdots\cdots\notag\\
&+\bigg[C^{1}_{3}\big(\frac{\partial^{s-1}}{\partial\theta^{s-1}}\frac{\widetilde{p}\mathcal{K}^{2}}{\phi}\big)
+C^{2}_{3}\big(\frac{\partial^{s-2}}{\partial\theta^{s-2}}\frac{\widetilde{p}_{\theta}\mathcal{K}^{2}}{\phi}\big)+f(\mathcal{K},\mathcal{K}_{1},\mathcal{K}_{2},\cdots,\mathcal{K}_{s-3})\bigg]\mathcal{K}_{3}\notag\\
&+\bigg[C^{1}_{2}\big(\frac{\partial^{s}}{\partial\theta^{s}}\frac{\widetilde{p}\mathcal{K}^{2}}{\phi}\big)
+C^{2}_{2}\big(\frac{\partial^{s-1}}{\partial\theta^{s-1}}\frac{\widetilde{p}_{\theta}\mathcal{K}^{2}}{\phi}\big)+f(\mathcal{K},\mathcal{K}_{1},\mathcal{K}_{2},\cdots,\mathcal{K}_{s-2})\bigg]\mathcal{K}_{2}\notag\\
&+\bigg[C^{1}_{1}\big(\frac{\partial^{s}}{\partial\theta^{s}}\frac{\widetilde{p}_{\theta}\mathcal{K}^{2}}{\phi}\big)+f(\mathcal{K},\mathcal{K}_{1},\mathcal{K}_{2},\cdots,\mathcal{K}_{s-1})\bigg]\mathcal{K}_{1},
\end{align}
where $C^{1}_{1}, C^{1}_{2}, C^{2}_{2},\cdots,C^{1}_{s},C^{1}_{s}$ are certain constant coefficients and $f(\mathcal{K},\mathcal{K}_{1},\mathcal{K}_{2},\cdots,\mathcal{K}_{s-1})$ is a polynomial of $\mathcal{K},\mathcal{K}_{1},\mathcal{K}_{2},\cdots,
\mathcal{K}_{s-1}$
 (Similarly for the other terms $f(\mathcal{K},\mathcal{K}_{1},\mathcal{K}_{2},\cdots, \mathcal{K}_{s-2}), \cdots,\\f(\mathcal{K},\mathcal{K}_{1}))$. Since
$\frac{\partial^{s}}{\partial\theta^{s}}\frac{\widetilde{p}\mathcal{K}^{2}}{\phi}$ and $\frac{\partial^{s}}{\partial\theta^{s}}\frac{\widetilde{p}_{\theta}\mathcal{K}^{2}}{\phi}$ contain terms
$\frac{2\widetilde{p}\mathcal{K}\mathcal{K}_{s}}{\phi}$ and $\frac{2\widetilde{p}_{\theta}\mathcal{K}\mathcal{K}_{s}}{\phi}$, respectively, we organize the equation to obtain
\begin{equation*}
(\mathcal{K}_{s})_{t}=\frac{\widetilde{p}\mathcal{K}^{2}}{\phi}\mathcal{K}_{s+2}+\bigg[s\big(\frac{\widetilde{p}\mathcal{K}^{2}}{\phi}\big)_{\theta}+
\frac{2\widetilde{p}_{\theta}\mathcal{K}^{2}}{\phi}\bigg]\mathcal{K}_{s+1}+g_{s}(\mathcal{K},\mathcal{K}_{1},\mathcal{K}_{2})\mathcal{K}_{s}+h_{s}(\mathcal{K},\mathcal{K}_{1},\mathcal{K}_{2},\cdots,\mathcal{K}_{s-1}),
\end{equation*}
where $g_{s}(\mathcal{K},\mathcal{K}_{1},\mathcal{K}_{2})$ is the polynomial given by
$$g(\mathcal{K},\mathcal{K}_{1},\mathcal{K}_{2})=C^{1}_{s}\big(\frac{\widetilde{p}\mathcal{K}^{2}}{\phi}\big)_{\theta\theta}
+C^{2}_{s}\big(\frac{\widetilde{p}_{\theta}\mathcal{K}^{2}}{\phi}\big)_{\theta}+\frac{2C^{1}_{2}\widetilde{p}\mathcal{K}\mathcal{K}_{2}}{\phi}+
\frac{2C^{1}_{1}\widetilde{p}_{\theta}\mathcal{K}\mathcal{K}_{1}}{\phi}+f(\mathcal{K})$$
and $h(\mathcal{K},\mathcal{K}_{1},\mathcal{K}_{2},\cdots,\mathcal{K}_{s-1})$ is the polynomial given by
\begin{align}
h_{s}(\mathcal{K},\mathcal{K}_{1},&\mathcal{K}_{2},\cdots,\mathcal{K}_{s-1})=\bigg[C^{1}_{s-1}\big(\frac{\widetilde{p}\mathcal{K}^{2}}{\phi}\big)_{\theta\theta\theta}
+C^{2}_{s-1}\big(\frac{\widetilde{p}_{\theta}\mathcal{K}^{2}}{\phi}\big)_{\theta\theta}+f(\mathcal{K},\mathcal{K}_{1})\bigg]\mathcal{K}_{s-1}+\cdots\cdots\notag\\
&+\bigg[C^{1}_{3}\big(\frac{\partial^{s-1}}{\partial\theta^{s-1}}\frac{\widetilde{p}\mathcal{K}^{2}}{\phi}\big)
+C^{2}_{3}\big(\frac{\partial^{s-2}}{\partial\theta^{s-2}}\frac{\widetilde{p}_{\theta}\mathcal{K}^{2}}{\phi}\big)+f(\mathcal{K},\mathcal{K}_{1},\mathcal{K}_{2},\cdots,\mathcal{K}_{s-3})\bigg]\mathcal{K}_{3}\notag\\
&+\bigg[C^{1}_{2}\big(\frac{\partial^{s}}{\partial\theta^{s}}\frac{\widetilde{p}\mathcal{K}^{2}}{\phi}-\frac{2\widetilde{p}\mathcal{K}\mathcal{K}_{s}}{\phi}\big)
+C^{2}_{2}\big(\frac{\partial^{s-1}}{\partial\theta^{s-1}}\frac{\widetilde{p}_{\theta}\mathcal{K}^{2}}{\phi}\big)+f(\mathcal{K},\mathcal{K}_{1},\mathcal{K}_{2},\cdots,\mathcal{K}_{s-2})\bigg]\mathcal{K}_{2}\notag\\
&+\bigg[C^{1}_{1}\big(\frac{\partial^{s}}{\partial\theta^{s}}\frac{\widetilde{p}_{\theta}\mathcal{K}^{2}}{\phi}-\frac{2\widetilde{p}_{\theta}\mathcal{K}\mathcal{K}_{s}}{\phi}\big)+f(\mathcal{K},\mathcal{K}_{1},\mathcal{K}_{2},\cdots,\mathcal{K}_{s-1})\bigg]\mathcal{K}_{1}\notag.
\end{align}
Let $F=\mathcal{K}_{s+1}+\alpha\mathcal{K}_{s}^{2}$ with $\alpha>0$ being any fixed positive number. Combining \begin{equation*}
(\mathcal{K}_{s+1})_{t}=\frac{\widetilde{p}\mathcal{K}^{2}}{\phi}\mathcal{K}_{s+3}+\bigg[s\big(\frac{\widetilde{p}\mathcal{K}^{2}}{\phi}\big)_{\theta}+
\frac{2\widetilde{p}_{\theta}\mathcal{K}^{2}}{\phi}\bigg]\mathcal{K}_{s+2}+g_{s+1}(\mathcal{K},\mathcal{K}_{1},\mathcal{K}_{2})\mathcal{K}_{s+1}+h_{s+1}(\mathcal{K},\mathcal{K}_{1},\mathcal{K}_{2},\cdots,\mathcal{K}_{s})
\end{equation*}
we obtain
$$F_{t}=\frac{\widetilde{p}\mathcal{K}^{2}}{\phi}F_{\theta\theta}+\bigg[(s+1)\big(\frac{\widetilde{p}\mathcal{K}^{2}}{\phi}\big)_{\theta}+
\frac{2\widetilde{p}_{\theta}\mathcal{K}^{2}}{\phi}\bigg]F_{\theta}+G(\theta,t)$$
and
\begin{align*}
G(\theta,t)=&\frac{-2\alpha\widetilde{p}\mathcal{K}^{2}}{\phi}\mathcal{K}_{s+1}^{2}+\bigg[g_{s+1}(\mathcal{K},\mathcal{K}_{1},\mathcal{K}_{2})
-2\alpha\big(\frac{\widetilde{p}\mathcal{K}^{2}}{\phi}\big)_{\theta}\mathcal{K}_{s}\bigg]\mathcal{K}_{s+1}
+h_{s+1}(\mathcal{K},\mathcal{K}_{1},\mathcal{K}_{2},\cdots,\mathcal{K}_{s})\\
&+2\alpha\big[g_{s}(\mathcal{K},\mathcal{K}_{1},\mathcal{K}_{2})\mathcal{K}_{s}
+h_{s}(\mathcal{K},\mathcal{K}_{1},\mathcal{K}_{2},\cdots,\mathcal{K}_{s-1})\big]\mathcal{K}_{s}
\end{align*}
For any $\alpha>0$, the coefficient of $\mathcal{K}_{s+1}^{2}$ is negative. Still by the argument in Lemma \ref{hies}, we can get the estimates for $F$ and thus for $\mathcal{K}_{s+1}$.
\end{proof}
By the above curvature estimates, we can conclude that the flow exists globally.
\begin{lemma}\label{glob}
(Global existence of the flow) The flow \eqref{flow1} exists in the time interval $[0,\infty)$.
\end{lemma}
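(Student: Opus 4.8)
The plan is to argue by contradiction, using the a priori estimates established above together with the short-time existence theory. Suppose the maximal existence time $T$ is finite. By Lemmas \ref{kupj} and \ref{kloj} the anisotropic curvature obeys $0 < C_2 \le \mathcal{K}(\theta, t) \le C_0$ uniformly on $S^1 \times [0, T)$, so $\kappa = \mathcal{K}/\phi$ is bounded above and below by positive constants independent of $t$. In particular the leading coefficient $\widetilde{p}\mathcal{K}^2/\phi$ in the evolution equation \eqref{mkt} lies between two positive constants, making the equation uniformly parabolic on $S^1 \times [0, T)$; and by Lemma \ref{conv} the evolving curve stays strictly convex, so the tangent angle $\theta$ remains a legitimate parameter up to time $T$.

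Next I would assemble the higher-order estimates. Lemmas \ref{graest} and \ref{hies}, together with the induction lemma, furnish uniform time-independent bounds on $\mathcal{K}_s = \partial_\theta^s \mathcal{K}$ for every $s \ge 1$. Differentiating the evolution equation repeatedly and inserting these spatial bounds controls all mixed space-time derivatives of $\mathcal{K}$ as well. Thus the family $\{\mathcal{K}(\cdot, t)\}$ is bounded in $C^\infty(S^1)$ with all its $t$-derivatives controlled, and the Arzel\`a--Ascoli theorem yields a smooth limit $\mathcal{K}(\cdot, T)$ with $\mathcal{K}(\cdot, t) \to \mathcal{K}(\cdot, T)$ in the $C^\infty$ topology as $t \to T^-$; the uniform lower bound $C_2$ persists in the limit, so $\mathcal{K}(\cdot, T) > 0$.

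Then $\kappa(\cdot, T) = \mathcal{K}(\cdot, T)/\phi$ is smooth and positive, and integrating the relation $p_{\theta\theta} + p = 1/\kappa$ (with the uniform support-function bounds from the proof of Lemma \ref{kupj}) recovers a smooth, closed, strictly convex curve $X(\cdot, T)$. Taking $X(\cdot, T)$ as new initial data and invoking the short-time existence and uniqueness result (as in Theorem 4.3 of \cite{py}), the flow continues on a strictly larger interval $[0, T + \varepsilon)$ for some $\varepsilon > 0$, contradicting the maximality of $T$. Hence $T = \infty$.

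The genuine work of the proof is carried out entirely in the preceding estimates; the global-existence statement is a bootstrapping argument on top of them. The only point demanding care is to confirm that the uniform $C^\infty$ bounds really do pass to a smooth limit curve at time $T$ whose curvature is bounded away from zero, so that the short-time theory applies cleanly at the restart. Once that is verified, the contradiction --- and therefore $T = \infty$ --- follows at once.
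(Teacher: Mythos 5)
Your continuation argument is correct and is precisely what the paper intends: the paper offers no written proof of this lemma, merely asserting that global existence follows from the preceding curvature estimates, and your contradiction-plus-restart argument (uniform two-sided bounds on $\mathcal{K}$, uniform bounds on all derivatives, smooth convergence to a strictly convex limit curve at $T$, then short-time existence to extend past $T$) is the standard way to make that assertion rigorous. No discrepancy with the paper's approach.
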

\section{Convergence of the flow}
We first show the Hausdorff convergence of the flow \eqref{flow1}.

\begin{lemma}
Under the flow \eqref{flow1}, the anisoperimetric deficit $\frac{\mathcal{L}^{2}}{4A\widetilde{A}}-1$ is nonincreasing in time and converges to zero exponentially as $t\to\infty$.
\end{lemma}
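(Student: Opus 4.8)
The plan is to exploit that the anisotropic length $\mathcal{L}$ is preserved along the flow (Lemma \ref{ala}), so that the only time-dependence in the anisoperimetric deficit $I(t):=\frac{\mathcal{L}^2}{4A\widetilde{A}}-1$ enters through the enclosed area $A(t)$. Differentiating and using $\frac{d\mathcal{L}}{dt}=0$ gives
$$\frac{dI}{dt}=-\frac{\mathcal{L}^2}{4\widetilde{A}A^2}\frac{dA}{dt}.$$
Since $\frac{dA}{dt}\ge0$ by Lemma \ref{ala}, the deficit is immediately nonincreasing, and the Minkowski inequality \eqref{mi} shows $I(t)\ge0$. Thus $I$ is a nonnegative, nonincreasing quantity, which disposes of the monotonicity claim.

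For the exponential decay I would upgrade the area estimate already obtained in the proof of Lemma \ref{ala} into a closed differential inequality for $I$ itself. Recall that combining \eqref{at}, the identity \eqref{ta} and the Wulff-Gage inequality \eqref{wgi} yields
$$\frac{dA}{dt}=-2\widetilde{A}+\frac{\mathcal{L}}{2\widetilde{A}}\int_0^L\widetilde{p}\mathcal{K}^2\,ds\ge\frac{\mathcal{L}^2}{2A}-2\widetilde{A}=\frac{\mathcal{L}^2-4\widetilde{A}A}{2A}.$$
Writing $\mathcal{L}^2-4\widetilde{A}A=4\widetilde{A}A\,I$, this reads $\frac{dA}{dt}\ge2\widetilde{A}I$. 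Substituting into the expression for $\frac{dI}{dt}$ gives
$$\frac{dI}{dt}\le-\frac{\mathcal{L}^2}{4\widetilde{A}A^2}\cdot2\widetilde{A}I=-\frac{\mathcal{L}^2}{2A^2}I.$$

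It then remains only to replace the coefficient $\frac{\mathcal{L}^2}{2A^2}$ by a positive time-independent constant. The key observation is that the area is bounded above: the Minkowski inequality \eqref{mi} together with the constancy of $\mathcal{L}$ gives $A(t)\le\frac{\mathcal{L}^2}{4\widetilde{A}}=\frac{\mathcal{L}(0)^2}{4\widetilde{A}}$. Hence $\frac{\mathcal{L}^2}{2A^2}\ge\frac{8\widetilde{A}^2}{\mathcal{L}(0)^2}=:C_0>0$, and we obtain the clean inequality $\frac{dI}{dt}\le-C_0 I$. Gronwall's lemma then yields $I(t)\le I(0)e^{-C_0 t}$, the desired exponential convergence to zero.

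I do not anticipate a serious obstacle, since every analytic ingredient—the preservation of $\mathcal{L}$, the monotonicity of $A$, the Wulff-Gage inequality \eqref{wgi}, and the Minkowski inequality \eqref{mi}—is already established in the preparation section. The one point requiring care is that the decay rate $C_0$ be genuinely independent of $t$; this is guaranteed precisely by the two-sided control $0<A(0)\le A(t)\le\frac{\mathcal{L}(0)^2}{4\widetilde{A}}$ on the area, whose upper bound is again a consequence of the Minkowski inequality and the constancy of the anisotropic length.
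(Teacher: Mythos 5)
Your proposal is correct and follows essentially the same route as the paper: differentiate the deficit using the constancy of $\mathcal{L}$, bound $\frac{dA}{dt}$ from below via the Wulff--Gage inequality \eqref{wgi} to get $\frac{dI}{dt}\le-\frac{\mathcal{L}^{2}}{2A^{2}}I$, then use the Minkowski bound $A\le\frac{\mathcal{L}^{2}(0)}{4\widetilde{A}}$ to obtain the uniform decay rate $\frac{8\widetilde{A}^{2}}{\mathcal{L}^{2}(0)}$ and integrate. The only cosmetic difference is that you record the monotonicity separately from $\frac{dA}{dt}\ge0$, whereas the paper reads it off from the same differential inequality.
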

\begin{proof}
From the evolution of $\mathcal{L}$ and $A$, we compute to obtain
\begin{align*}
\frac{d}{dt}\big(\frac{\mathcal{L}^{2}}{4A\widetilde{A}}-1\big)=&\frac{-\mathcal{L}^{2}A_{t}}{4\widetilde{A}A^{2}}
=\frac{-\mathcal{L}^{2}\big(-2\widetilde{A}+\frac{\mathcal{L}}{2\widetilde{A}}\int_{0}^{L}\widetilde{p}\mathcal{K}^{2}ds\big)}
{4\widetilde{A}A^{2}}
\le\frac{2\mathcal{L}^{2}\widetilde{A}-\frac{\mathcal{L}^{4}}{2A}}{4\widetilde{A}A^{2}}
=-\frac{\mathcal{L}^{2}}{2A^{2}}(\frac{\mathcal{L}^{2}}{4A\widetilde{A}}-1\big).
\end{align*}
Since $A(t)\le\frac{\mathcal{L}^{2}(t)}{4\widetilde{A}}$, we have
$$\frac{d}{dt}\big(\frac{\mathcal{L}^{2}}{4A\widetilde{A}}-1\big)\le
-\frac{\mathcal{L}^{2}}{2\big(\frac{\mathcal{L}^{2}}{4\widetilde{A}}\big)^{2}}(\frac{\mathcal{L}^{2}}{4A\widetilde{A}}-1\big)
=-\frac{8\widetilde{A}^{2}}{\mathcal{L}^{2}}(\frac{\mathcal{L}^{2}}{4A\widetilde{A}}-1\big).$$
Integrating this yields
$$\frac{\mathcal{L}^{2}}{4A\widetilde{A}}-1\le\big(\frac{\mathcal{L}^{2}(0)}{4\widetilde{A}A(0)}-1\big)e^{
-\frac{8\widetilde{A}^{2}}{\mathcal{L}^{2}(0)}t}.$$
\end{proof}
Denote by $K$ the convex domain enclosed by convex curve $X$. Let
$$r_{in}=\max\{r>0|r\widetilde{X}+x\subseteq K,\,x\in K\}$$
and
$$r_{out}=\min\{r>0|r\widetilde{X}+x\supseteq K,\,x\in K\}.$$
According to \cite{Gage93}, it holds that
\begin{equation}
\frac{\mathcal{L}^{2}}{4A\widetilde{A}}-1\ge\frac{\widetilde{A}}{4A}(r_{out}-r_{in})^{2}  \quad\text{(Bonnesen inequality)},
\end{equation}
where the equality holds if and only if $X$ is a homothety of the boundary of Wulff shape $\widetilde{X}$. Then we have the following corollary.
\begin{corollary}\label{haus}
(Hausdorff convergence) The flow \eqref{flow1} converges to the boundary of Wulff shape $\widetilde{X}$ in the sense of Hausdorff metric as $t\to\infty$.
\end{corollary}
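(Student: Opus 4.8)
The plan is to combine the exponential decay of the anisoperimetric deficit from the previous lemma with the Bonnesen inequality displayed just above. First I would note that the enclosed area is trapped: by Lemma~\ref{ala} the area $A(t)$ is nondecreasing, while the Minkowski inequality~\eqref{mi} gives $A(t)\le \mathcal{L}^{2}(t)/(4\widetilde{A})=\mathcal{L}^{2}(0)/(4\widetilde{A})$. Hence $0<A(0)\le A(t)\le \mathcal{L}^{2}(0)/(4\widetilde{A})$, and $A(t)$ increases to a finite limit $A_{\infty}>0$. Rearranging the Bonnesen inequality and inserting this bound yields
$$(r_{out}-r_{in})^{2}\le\frac{4A}{\widetilde{A}}\Big(\frac{\mathcal{L}^{2}}{4A\widetilde{A}}-1\Big)\le\frac{\mathcal{L}^{2}(0)}{\widetilde{A}^{2}}\Big(\frac{\mathcal{L}^{2}}{4A\widetilde{A}}-1\Big),$$
so that $r_{out}-r_{in}\to0$ exponentially fast, with the same rate as the deficit.

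Next I would upgrade the closeness of the two radii to Hausdorff closeness of the associated Wulff shapes. By the definitions of $r_{in}$ and $r_{out}$ there are points $x_{1},x_{2}\in K$ with $r_{in}\widetilde{X}+x_{1}\subseteq K\subseteq r_{out}\widetilde{X}+x_{2}$. Writing $\nu(\theta)$ for the outer unit normal, the support function of $r\widetilde{X}+x$ is $r\widetilde{p}(\theta)+\langle x,\nu(\theta)\rangle$, and a convex inclusion is equivalent to the pointwise inequality of support functions. From $r_{in}\widetilde{X}+x_{1}\subseteq r_{out}\widetilde{X}+x_{2}$ I get $\langle x_{1}-x_{2},\nu(\theta)\rangle\le(r_{out}-r_{in})\widetilde{p}(\theta)$ for all $\theta$; choosing $\nu$ in the direction of $x_{1}-x_{2}$ gives $|x_{1}-x_{2}|\le(r_{out}-r_{in})\max_{S^{1}}\widetilde{p}$. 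Consequently the support functions of the inner and outer shapes differ by at most $2(r_{out}-r_{in})\max_{S^{1}}\widetilde{p}$, hence so does their Hausdorff distance. Since Hausdorff distance is monotone under the sandwiching $r_{in}\widetilde{X}+x_{1}\subseteq K\subseteq r_{out}\widetilde{X}+x_{2}$, both $d_{H}(K,r_{out}\widetilde{X}+x_{2})$ and $d_{H}(K,r_{in}\widetilde{X}+x_{1})$ are bounded by the same quantity and therefore tend to zero exponentially.

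Finally, since $A(t)\to A_{\infty}$ and $r_{in}^{2}\widetilde{A}\le A(t)\le r_{out}^{2}\widetilde{A}$, both radii converge to $r_{\infty}:=\sqrt{A_{\infty}/\widetilde{A}}$, and I would conclude that the evolving curve $X(\cdot,t)=\partial K(t)$ converges in the Hausdorff metric to the boundary of a homothetic copy of the Wulff shape $\widetilde{X}$. The step I expect to be the main obstacle is precisely the control of the center displacement $|x_{1}-x_{2}|$: a priori the optimal inner and outer Wulff shapes need not be concentric, so the sandwiching alone does not immediately bound the Hausdorff distance, and the support-function comparison above is exactly what closes this gap. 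Pinning down the exact limiting translate (i.e. convergence of the centers themselves) is a finer issue, but it is subsumed by the stronger $C^{\infty}$ convergence established later.
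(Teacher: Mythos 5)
Your proposal follows the same route the paper intends: exponential decay of the anisoperimetric deficit combined with the Bonnesen inequality forces $r_{out}-r_{in}\to 0$, which is then upgraded to Hausdorff closeness. The paper actually states Corollary~\ref{haus} with no further argument, so the extra steps you supply (the support-function bound $|x_{1}-x_{2}|\le (r_{out}-r_{in})\max_{S^{1}}\widetilde{p}$ for the possibly non-concentric inner and outer Wulff shapes, and the area sandwich $r_{in}^{2}\widetilde{A}\le A\le r_{out}^{2}\widetilde{A}$ pinning down the limiting scale) are correct details the paper omits, and your caveat that convergence of the centers themselves is not settled by this argument alone is a fair observation that applies equally to the paper's own treatment.
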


\begin{lemma}
Under the flow \eqref{flow1}, we have
\begin{equation}\label{at0}
\frac{dA}{dt}(t)\to 0\quad \text{as} \to\infty.
\end{equation}
\end{lemma}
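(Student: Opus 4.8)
The plan is to show that $A'(t)$ is a nonnegative, integrable function of time whose own time-derivative is uniformly bounded, and then to invoke the elementary fact that such a function must tend to $0$.

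First I would record that $A(t)$ converges as $t\to\infty$. By Lemma~\ref{ala} the area is non-decreasing, $\frac{dA}{dt}\ge 0$, while the Minkowski inequality \eqref{mi} together with the conservation of anisotropic length gives $A(t)\le \frac{\mathcal{L}^2}{4\widetilde{A}}=\frac{\mathcal{L}^2(0)}{4\widetilde{A}}$. Hence $A(t)$ is monotone and bounded above, so it has a finite limit $A_{\infty}$, and in particular $\int_0^{\infty}\frac{dA}{dt}\,dt = A_{\infty}-A(0)<\infty$; that is, $A'(t)\in L^1([0,\infty))$.

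Integrability alone does not force $A'(t)\to 0$, so the key step is to bound $A''(t)$. Using the expression $\frac{dA}{dt}=-2\widetilde{A}+\frac{\mathcal{L}}{2\widetilde{A}}\int_0^L\widetilde{p}\mathcal{K}^2\,ds$ obtained in the proof of Lemma~\ref{ala}, and the fact that $\mathcal{L}$ is constant in time, I would rewrite the nonlocal term via $ds=d\theta/\kappa$ and $\mathcal{K}=\phi\kappa$ as $\int_0^L\widetilde{p}\mathcal{K}^2\,ds=\int_0^{2\pi}\widetilde{p}\phi\mathcal{K}\,d\theta$. Since $\theta$ is time-independent under \eqref{flow1}, differentiating under the integral sign yields $A''(t)=\frac{\mathcal{L}}{2\widetilde{A}}\int_0^{2\pi}\widetilde{p}\phi\,\mathcal{K}_t\,d\theta$, and substituting the evolution equation \eqref{mkt} for $\mathcal{K}_t$ expresses $A''(t)$ as an integral of a polynomial expression in $\mathcal{K},\mathcal{K}_{\theta},\mathcal{K}_{\theta\theta}$ and $\lambda(t)$, with $\widetilde{p},\phi$ and their derivatives serving as bounded smooth coefficients.

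At this point the uniform estimates of Section~4 do all the work: Lemmas~\ref{kupj}, \ref{kloj}, \ref{graest} and \ref{hies} give time-independent bounds on $\mathcal{K}$, $\mathcal{K}_{\theta}$ and $\mathcal{K}_{\theta\theta}$, while the two remarks bound $\lambda(t)$ from above and below. Consequently $\mathcal{K}_t$ is uniformly bounded on $S^1\times[0,\infty)$, and therefore $\lvert A''(t)\rvert\le C$ for some constant $C$ independent of $t$; equivalently, $A'(t)$ is uniformly (Lipschitz) continuous. Finally I would apply the standard lemma that if $f\ge 0$ is uniformly continuous on $[0,\infty)$ and $\int_0^{\infty}f\,dt<\infty$, then $f(t)\to 0$ as $t\to\infty$; taking $f(t)=A'(t)$ gives \eqref{at0}. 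The only mild obstacle is the bookkeeping in differentiating the nonlocal quantity $\int_0^L\widetilde{p}\mathcal{K}^2\,ds$ and verifying that each resulting term is controlled by the previously derived curvature bounds; once that is in place the conclusion is immediate.
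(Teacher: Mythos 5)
Your proposal is correct and follows essentially the same route as the paper: both arguments rest on the integrability of $A'(t)$ (from monotonicity of $A$ and the bound $A\le\mathcal{L}^{2}(0)/4\widetilde{A}$) together with a uniform bound on $A''(t)$ obtained from the curvature estimates of Section~4. The only cosmetic difference is that you invoke Barbalat's lemma directly, whereas the paper carries out the equivalent contradiction argument by hand.
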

\begin{proof}
Assume not for \eqref{at0}. Since $\frac{dA}{dt}(t)\ge0$, there exists a constant $D_{0}>0$ independent of time and a sequence of times $\{t_{i}\}^{\infty}_{i=1}$ going to infinity such that
$$\frac{dA}{dt}(t_{i})\ge D_{0}>0$$
for all $i=1,2,3\cdots$. Also note that
$$\bigg\lvert\frac{d^{2}A}{dt^{2}}(t)\bigg\lvert=\bigg\lvert\frac{d}{dt}\big(
-2\widetilde{A}+\frac{\mathcal{L}}{2\widetilde{A}}\int_{0}^{L}\widetilde{p}\mathcal{K}^{2}ds\big)\bigg\lvert
=\bigg\lvert\frac{\mathcal{L}}{2\widetilde{A}}\int_{0}^{2\pi}\frac{d}{dt}\big(\frac{\widetilde{p}\phi^{2}\kappa^{2}}{\kappa}\big)d\theta\bigg\lvert
\le\frac{\mathcal{L}\max_{S^{1}}(\widetilde{p}\phi^{2})}{2\widetilde{A}}\int_{0}^{2\pi}\lvert\kappa_{t}\lvert d\theta.$$
By the equation \eqref{kt} and the estimates established earlier, there exists a constant $D_{1}$ such that $\big\lvert\frac{d^{2}A}{dt^{2}}(t)\big\lvert\le D_{1}$ for all $t\in[0,\infty)$. As the slope of $A^{'}(t)$ is uniformly bounded, one can find a number $\mu_{0}$ independent of $t_{i}$ such that
$$A^{'}(t)\ge\frac{D_{0}}{2}>0,\quad\forall t\in[t_{i},t_{i}+\mu_{0}].$$
This implies that
$$A(\infty)-A(0)=\int^{\infty}_{0}A^{'}(t)dt=\infty,$$
which contradicts the inequality $0<A(t)\le\frac{\mathcal{L}^{2}(t)}{4\widetilde{A}}=\frac{\mathcal{L}^{2}(0)}{4\widetilde{A}}$. This finishes the proof.
\end{proof}
Next we prove the convergence of anisotropic curvature.

\begin{lemma}\label{cwu}
Under the flow \eqref{flow1}, we have
\begin{equation}
\lim\limits_{t\to\infty}\bigg\Vert\mathcal{K}(\theta,t)-\frac{2\widetilde{A}}{\mathcal{L}(0)}\bigg\Vert_{C^{m}(S^{1})}=0,
\quad \forall m=0,1,2,3\cdots
\end{equation}
\end{lemma}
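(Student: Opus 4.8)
The plan is to prove the $C^{0}$ convergence first, by an integral (Cauchy--Schwarz) argument that exploits the two conserved integrals of the flow together with the decay $A'(t)\to 0$ from the previous lemma, and then to bootstrap to $C^{m}$ convergence for every $m$ using the uniform higher--order bounds of Section 4 and interpolation. First I would record three facts in a common form. Writing $d\mu=\widetilde{p}\phi\,d\theta$ for the fixed positive measure on $S^{1}$, \eqref{ta} gives the total mass $\int_{S^{1}}d\mu=\int_{0}^{2\pi}\widetilde{p}\phi\,d\theta=2\widetilde{A}$. Since $ds=(\phi/\mathcal{K})\,d\theta$, the preserved anisotropic length reads $\int_{S^{1}}\mathcal{K}^{-1}\,d\mu=\int_{0}^{2\pi}\widetilde{p}\phi/\mathcal{K}\,d\theta=\mathcal{L}(0)$, and likewise $\int_{0}^{L}\widetilde{p}\mathcal{K}^{2}\,ds=\int_{S^{1}}\mathcal{K}\,d\mu$. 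Feeding this into $A'(t)=-2\widetilde{A}+\frac{\mathcal{L}(0)}{2\widetilde{A}}\int_{0}^{L}\widetilde{p}\mathcal{K}^{2}\,ds$ and using $A'(t)\to 0$ yields $\int_{S^{1}}\mathcal{K}\,d\mu\to\frac{4\widetilde{A}^{2}}{\mathcal{L}(0)}$.

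Next, set $\mathcal{K}_{\infty}=\frac{2\widetilde{A}}{\mathcal{L}(0)}$ and expand a weighted $L^{2}$ quantity:
$$\int_{S^{1}}\frac{(\mathcal{K}-\mathcal{K}_{\infty})^{2}}{\mathcal{K}}\,d\mu=\int_{S^{1}}\mathcal{K}\,d\mu-2\mathcal{K}_{\infty}\int_{S^{1}}d\mu+\mathcal{K}_{\infty}^{2}\int_{S^{1}}\frac{d\mu}{\mathcal{K}}=\int_{S^{1}}\mathcal{K}\,d\mu-\frac{4\widetilde{A}^{2}}{\mathcal{L}(0)}\longrightarrow 0.$$
Because $\mathcal{K}$ has the uniform upper bound $C_{0}$ of Lemma \ref{kupj} and $d\mu\ge(\min_{S^{1}}\widetilde{p}\phi)\,d\theta$, this forces $\|\mathcal{K}-\mathcal{K}_{\infty}\|_{L^{2}(S^{1})}\to 0$. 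I would then upgrade this to uniform convergence using the gradient estimate of Lemma \ref{graest}: applying the Agmon-type interpolation inequality $\|f\|_{\infty}^{2}\le C\|f\|_{L^{2}}\|f\|_{H^{1}}$ to $f=\mathcal{K}-\mathcal{K}_{\infty}$, whose $H^{1}$ norm is uniformly bounded since $\|\mathcal{K}_{\theta}\|_{\infty}\le C_{1}$, gives $\|\mathcal{K}-\mathcal{K}_{\infty}\|_{\infty}\to 0$, i.e.\ the $m=0$ case.

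For $m\ge 1$, note $\partial_{\theta}^{j}(\mathcal{K}-\mathcal{K}_{\infty})=\mathcal{K}_{j}$ since $\mathcal{K}_{\infty}$ is constant. The higher--order estimates of Section 4 supply, for every $N$, a time--independent bound $\|\mathcal{K}-\mathcal{K}_{\infty}\|_{C^{N}(S^{1})}\le C_{N}$. A Gagliardo--Nirenberg interpolation inequality of the schematic form $\|\mathcal{K}_{j}\|_{\infty}\le C\,\|\mathcal{K}-\mathcal{K}_{\infty}\|_{L^{2}}^{\sigma}\,\|\mathcal{K}-\mathcal{K}_{\infty}\|_{C^{N}}^{1-\sigma}$ with $N>j$ and some $\sigma\in(0,1)$ then drives each $\|\mathcal{K}_{j}\|_{\infty}$ to zero, the first factor vanishing while the second stays bounded. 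Hence $\|\mathcal{K}-\mathcal{K}_{\infty}\|_{C^{m}(S^{1})}\to 0$ for every $m$.

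The genuinely new input is the first paragraph: extracting weighted $L^{2}$ decay of $\mathcal{K}-\mathcal{K}_{\infty}$ from the vanishing of $A'(t)$ by recognizing the two conserved integrals $\int d\mu=2\widetilde{A}$ and $\int\mathcal{K}^{-1}d\mu=\mathcal{L}(0)$ and the saturation of the induced Cauchy--Schwarz inequality. I expect this bookkeeping, and in particular verifying the exact constant $\mathcal{K}_{\infty}=2\widetilde{A}/\mathcal{L}(0)$ and the sign of the weighted $L^{2}$ expression, to be the only delicate part; the passage from $L^{2}$ to $C^{0}$ and then to $C^{m}$ is routine interpolation once the uniform smoothness bounds are available.
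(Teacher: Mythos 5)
Your proposal is correct, and it reaches the conclusion by a genuinely different (more quantitative) route than the paper. The paper extracts, via Arzel\`a--Ascoli and the uniform estimates of Section~4, a subsequential $C^\infty$ limit $\mathcal{K}_\infty(\theta)$ along any $t_i\to\infty$, passes $\frac{dA}{dt}\to 0$ to the limit to get $\int\widetilde{p}\,ds\int\widetilde{p}\mathcal{K}_\infty^2\,ds=\bigl(\int\widetilde{p}\mathcal{K}_\infty\,ds\bigr)^2$, invokes the equality case of Cauchy--Schwarz to conclude $\mathcal{K}_\infty$ is constant, and identifies the constant from the preserved anisotropic length; full convergence then follows from the uniqueness of subsequential limits. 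You instead work directly at time $t$: your weighted quantity $\int_{S^1}(\mathcal{K}-\mathcal{K}_\infty)^2\mathcal{K}^{-1}\,d\mu$ is precisely $\mathcal{L}(0)^{-1}$ times the Cauchy--Schwarz deficit $\int\widetilde{p}\,ds\int\widetilde{p}\mathcal{K}^2\,ds-\bigl(\int\widetilde{p}\mathcal{K}\,ds\bigr)^2$, so the underlying algebraic identity is the same, but by expanding the square rather than invoking the rigidity statement abstractly you obtain an explicit $L^2(S^1)$ decay of $\mathcal{K}-2\widetilde{A}/\mathcal{L}(0)$ controlled by $A'(t)$, and then upgrade to $C^0$ and $C^m$ by Agmon/Gagliardo--Nirenberg interpolation against the time-independent $C^N$ bounds. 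Your bookkeeping checks out (the three integrals $\int d\mu=2\widetilde{A}$, $\int\mathcal{K}^{-1}d\mu=\mathcal{L}(0)$, $\int\mathcal{K}\,d\mu=\int\widetilde{p}\mathcal{K}^2ds$ are all right, and the cross terms combine to $-4\widetilde{A}^2/\mathcal{L}(0)$ as claimed). What your approach buys is the avoidance of the subsequence/diagonal argument and, in principle, a rate of convergence once $A'(t)\to0$ is made quantitative (the paper's exponential decay of the anisoperimetric deficit could feed into this); what the paper's approach buys is brevity, since the interpolation step is replaced by the one-line remark that $C^\infty$ subsequential limits all coincide.
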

\begin{proof}
By the regularity estimate, it is sufficient to show $C^{0}$ convergence. By the well-known Arzela-Ascoli theorem, for any sequence of time $\{t_{i}\}^{\infty}_{i=1}$ going to infinity, there is a subsequence (still denoted by $\{t_{i}\}^{\infty}_{i=1}$) such that $\mathcal{K}(\theta,t_{i})\to\mathcal{K}_{\infty}(\theta)$ and $\kappa
(\theta,t_{i})\to\kappa_{\infty}(\theta)$ in $C^{\infty}(S^{1})$ as $i\to\infty$, where $\mathcal{K}_{\infty}=\phi\kappa_{\infty}=\frac{\kappa_{\infty}}{\widetilde{\kappa}}$. Moreover,
\begin{align*}
\frac{dA}{dt}&=-\int_{0}^{L}\widetilde{p}\mathcal{K}ds+\frac{\int_{0}^{L}\widetilde{p}ds}{2\widetilde{A}}\int_{0}^{L}\widetilde{p}\mathcal{K}^{2}ds\\
&=-\int_{0}^{L}\widetilde{p}\mathcal{K}ds+\frac{\int_{0}^{L}\widetilde{p}ds}{\int_{0}^{L}\widetilde{p}\mathcal{K}ds}\int_{0}^{L}\widetilde{p}\mathcal{K}^{2}ds\\
&\to-\int_{0}^{L}\widetilde{p}\mathcal{K}_{\infty}ds+\frac{\int_{0}^{L}\widetilde{p}ds}{\int_{0}^{L}\widetilde{p}\mathcal{K}_{\infty}ds}\int_{0}^{L}\widetilde{p}\mathcal{K}_{\infty}^{2}ds=0
\end{align*}
Hence
$$\int_{0}^{L}\widetilde{p}ds\int_{0}^{L}\widetilde{p}\mathcal{K}_{\infty}^{2}ds=\big(\int_{0}^{L}\widetilde{p}\mathcal{K}_{\infty}ds\big)^{2}.$$
The equality of Cauchy-Schwarz Inequality tells us that $\mathcal{K}_{\infty}\equiv C^{\ast}$,where $C^{\ast}$ is a constant. In particular, we have
$$\mathcal{K}(\cdot,t_{i})\to C^{\ast}\,\,\text{in}\,\,C^{\infty}(S^{1})\,\,\text{as}\,\,i\to\infty,$$
or equivalently
$$\kappa(\cdot,t_{i})\to C^{\ast}\widetilde{\kappa}\,\,\text{in}\,\,C^{\infty}(S^{1})\,\,\text{as}\,\,i\to\infty.$$
Since the anisotropic length is preserving, i.e., $\mathcal{L}(t)=\mathcal{L}(0)$, combining with \eqref{a}, \eqref{anl} and \eqref{wua} yields
$$C^{\ast}=\frac{2\widetilde{A}}{\mathcal{L}(0)}.$$
As a result, we conclude that $\mathcal{K}(\cdot,t)$ converges to $\frac{2\widetilde{A}}{\mathcal{L}(0)}$ as $t\to\infty$.
\end{proof}

The proof of Theorem \ref{them} is now complete due to Lemma \ref{ala}, Lemma \ref{conv}, Lemma \ref{glob}, Remark \ref{haus} and Lemma \ref{cwu}.

\textbf{Acknowledgements}
This work was partially supported by Science and Technology Commission
of Shanghai Municipality (No. 22DZ2229014). The research is supported by Shanghai Key Laboratory of PMMP.

\textbf{Data availability}
Data sharing  not applicable to this article as no datasets were generated or analysed during the current study.

\textbf{Conflict of interest}
The author has no conflicts of interest to declare that are relevant to the content of this article.

% \vskip 20cm \begin{center}\,\end{center}

\end{document}